\theoremstyle{plain}
\newtheorem{thm}{Theorem}[section]
\newtheorem{lem}[thm]{Lemma}
\newtheorem{cor}[thm]{Corollary}
\theoremstyle{definition}
\newtheorem{exmp}{Example}[section]
\theoremstyle{remark}
\newcommand{\ftnt}{\footnote}
\let\oldproofname=\proofname
\renewcommand{\proofname}
{\rm\bf{\oldproofname}}
\begin{document}
\title{\textrm{\textbf{Fixed point theorems for weak contraction in partially ordered $G$-metric space} }}

\author{\textbf{Snehasish Bose$^{1}$, Sk Monowar Hossein$^{2,*}$}}
\date{}
\maketitle
Department of Mathematics, Jadavpur University, Jadavpur-32, West Bengal, INDIA.
\ftnt{Email address: snehasishbose89@gmail.com}

Department of Mathematics, Aliah University, Sector-V, Kolkata-91, West Bengal, INDIA.
\ftnt{$^{,*}$ Corresponding author, email address: hossein@iucaa.ernet.in}

\begin{abstract}
  In this paper, we present some fixed point theorems in partially ordered G-metric space using the concept $(\psi,\phi)$- weak contraction which extend many existing fixed point theorems in such space. We also give some examples to show that if we transform a metric space into a G-metric space our results are not equivalent to the existing results in metric space.
\end{abstract}

{\bf Keywords:} Fixed point, Weakly contraction, Lower semicontinuous, Partially ordered set, $G$-metric space.\\

\noindent 2010 Mathematics subject classification : Primary:  47H10, 47H09, Secondary: 54H25 and 55M20\\

\section{Introduction}
$~~$Let \textbf{X} be a nonempty set. A function $G:\textbf{X}\times\textbf{X}\times\textbf{X}\rightarrow[0,\infty)$ is called $G$-metric on $\textbf{X}$ if it satisfy the following properties :\begin{description}
                                               \item[(G1)] $G(x,y,z)=0$ if $x=y=z$,
                                               \item[(G2)] $0<G(x,x,y)$ for all $x,y\in~\textbf{X}$ with $y\neq~z$,
                                               \item[(G3)] $G(x,x,y)\leq~G(x,y,z)$ for all $x,y,z\in~\textbf{X}$ with $y\neq~z$,
                                               \item[(G4)] $G(x,y,z)=G(p\{x,y,z\})~~\forall~x,y,z\in\textbf{X}$, where $p$ is a permutation on \{x,y,z\},
                                               \item[(G5)] $G(x,y,z)\leq~G(x,a,a)+G(a,y,z)$ for all $x,y,z,a\in\textbf{X}$ (rectangle inequality).
                                             \end{description}
$~~$This notion of $G$-metric was introduced by Mustafa and Sims \cite{MS} in 2006. It can be shown that if $(\textbf{X},d)$ is a metric space one can define $G$-metric on $\textbf{X}$ by\\
$$G(x,y,z)=\textmd{max}\{d(x,y),d(y,z),d(z,x)\}~~~~\textmd{or}~~~~G(x,y,z)=d(x,y)+d(y,z)+d(z,x).$$
$~~$A self map $f$ on metric space $(\textbf{X},d)$ is said to be $\phi$-weak contraction if there exists a map $\phi:[0,\infty)\rightarrow[0,\infty)$ with $\phi(0)=0$ and $\phi(t)>0$ for all $t>0$ such that
$$d(fx,fy)\leq{d(x,y)-\phi(d(x,y))},~~~\forall~{x,y\in\textbf{X}}.\eqno{(1.1)}$$
$~~$A self map $f$ on metric space $(\textbf{X},d)$ is said to be $(\psi,\phi)$-weak contraction if there exists two maps $\psi,\phi:[0,\infty)\rightarrow[0,\infty)$ with $\psi(0)=\phi(0)=0$ and $\psi(t)>0$ and $\phi(t)>0$ for $t>0$ such that
$$\psi(d(fx,fy))\leq{\psi(d(x,y))-\phi(d(x,y))},~~~\forall~{x,y\in\textbf{X}}.\eqno{(1.2)}$$
$~~$A self map $f$ on metric space $(\textbf{X},d)$ is said to be generalized $(\psi,\phi)$-weak contraction if there exists two maps $\psi,\phi:[0,\infty)\rightarrow[0,\infty)$ with $\psi(0)=\phi(0)=0$ and $\psi(t)>0$ and $\phi(t)>0$ for $t>0$ such that
$$\psi(d(fx,fy))\leq{\psi(M(x,y))-\phi(M(x,y))},~~~\forall~{x,y\in\textbf{X}},\eqno{(1.3)}$$
where $M(x,y)$=max $\{d(x,y),d(x,fx),d(y,fy),\frac{1}{2}[d(x,fy)+d(y,fx)]\}$.\\
$~~$Using $\phi$-weak, $(\psi,\phi)$-weak and generalized $(\psi,\phi)$-weak contraction many authors studied existence of fixed points in complete metric spaces as well as parially ordered complete metric spaces. Some of them are Rhoades \cite{RHB}, Dutta and Choudhury \cite{DBS}, Dori\'{c} \cite{DDOR}, Popescu \cite{OP}, Moradia and Farajzadeh \cite{MA}, Harjani and Sadarangani \cite{HS}, Nashine and samet \cite{SANA}. Radenovi\'{c} and Kadelburg \cite{SRZK} showed that if $f$ is a self map on a complete partially ordered metric space $(\textbf{X},\preceq,d)$ with $x_0\preceq{fx_0}$ for some $x_0\in\textbf{X}$ and for any two comparable elements $x,y$ in $\textbf{X}$ there exists a continuous, non-decreasing function $\psi:[0,\infty)\rightarrow[0,\infty)$ and a lower semi-continuous function $\phi:[0,\infty)\rightarrow[0,\infty)$ satisfying $\psi(t)=\phi(t)=0$ iff $t=0$ such that
$$\psi(d(fx,fy))\leq{\psi(M(x,y))-\phi(M(x,y))},\eqno{(1.4)}$$
then in each of the following cases $f$ has a fixed point:
\begin{description}
  \item[\textit{(i)}] $f$ or $g$ is continuous, or
  \item[\textit{(ii)}] if a non-decreasing sequence $\{x_n\}$ converges to $x\in\textbf{X}$, then $x_n\preceq{x}$ for all n.
\end{description}
 $~$Existence of fixed point has important role in solving differential equations \cite{NILO,NILOO}, matrix equations \cite{RR} and integral equations. There are several works on fixed point in $G$-metric space \cite{MS1,MSB,JMB,SAV,KAG,RKA,RSPB}. But in 2012 Samet and Jeli \cite{JMB,SAV} showed that major amount of results were obtained by transforming the contraction condition in usual or quasi metric spaces context to G-metric spaces. Recently Karapinar and Agarwal \cite{KAG} proved that if $f$ is a self map on a $G$-metric space $\textbf{X}$ such that
$$G(fx,f^2x,fy)\leq{G(x,fx,y)-\phi(G(x,fx,y))},~~~\forall~x,y\in\textbf{X},\eqno{(1.5)}$$
where $\phi:[0,\infty)\rightarrow[0,\infty)$ is continuous function such that $\phi(t)=0$ iff $t=0$, then f has a unique fixed point. They also showed that the above contraction could not characterized in context of usual or quasi metric space as suggested in \cite{JMB,SAV}.\\
$~~$A partially ordered $G$-metric space is said to be regular non decreasing if for all $\preceq$-monotone non-decreasing sequence $\{x_n\}\in\textbf{X}$ such that $x_n\rightarrow~x^*$ implies $x_n\preceq~x^*$ for all $n\in\mathbb{N}$.\\
$~~$In this paper, using the concept of $(\psi,\phi)$-weak contraction we present some fixed point theorems in partially ordered $G$-metric space and we show that one of our result extend the fixed point theorem given by Karapinar and Agarwal \cite{KAG} on partially ordered $G$-metric space. We also give a sufficient condition for uniqueness of fixed point and an example to show that our result is not equivalent to the result of Radenovi\'{c} and Kadelburg \cite{SRZK}.

\section{Existence of fixed points}

Let us consider two sets
 $\Psi$ =\{$\psi:[0,\infty)\rightarrow[0,\infty):\psi$ is continuous, non-decreasing and $\psi(t)=0$ iff $t=0$\}\\
and $\Phi$ =\{$\phi:[0,\infty)\rightarrow[0,\infty):\phi$ is lower semi-continuous, and $\phi(t)=0$ iff $t=0$\}\\
\begin{thm}
\textit{Let $(\textbf{X},\preceq,G)$ be a complete partially ordered $G$-metric space. Let T : $\textbf{X}\rightarrow\textbf{X}$ be a mapping satisfying the following conditions:}
\begin{enumerate}
\item \textit{T is G-continuous or $(\textbf{X},\preceq,G)$ is regular non-decreasing,}\
\item \textit{T is non-decreasing,}\
\item \textit{There exists $x_0\in\textbf{X}$ with $x_0\preceq{Tx_0}$,}\
\item \textit{There exists $\psi\in\Psi$ and $\phi\in\Phi$ such that for all comparable $x\preceq{y}\preceq{z}$ in $\textbf{X}$}\
\end{enumerate}

$$\psi(G(Tx,Ty,Tz))\leq\psi(M(x,y,z))-\phi(M(x,y,z)),\eqno{(2.1)}$$
\textit{where M(x,y,z)= max $\{G(x,Tx,y),~G(x,Tx,z),~G(x,y,z),~G(y,Ty,Ty),\\
~~~~~~~~~~~~~~~~~~~~~~~~~~~~~~~~~~~~~~~~~G(z,Tz,Tz),~\frac{1}{2}[G(x,Ty,Tz)+G(Tx,y,z)]\}$. }\\
\textit{Then T has a fixed point.}
\end{thm}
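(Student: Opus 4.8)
The plan is to study the Picard iterates $x_0,\,x_1=Tx_0,\,x_2=Tx_1,\dots$, with $x_{n+1}=Tx_n$, and to show that they form a Cauchy sequence whose limit is a fixed point. First I would invoke hypotheses (2) and (3): since $x_0\preceq Tx_0=x_1$ and $T$ is non-decreasing, induction gives $x_0\preceq x_1\preceq x_2\preceq\cdots$, so consecutive iterates are comparable and (2.1) may be applied along the sequence. Writing $t_n=G(x_n,x_{n+1},x_{n+1})$, I would apply (2.1) to the comparable triple $x_{n-1}\preceq x_n\preceq x_n$. A short computation using (G5) and $G(x_n,x_n,x_n)=0$ shows that every entry of $M(x_{n-1},x_n,x_n)$ is dominated by $\max\{t_{n-1},t_n\}$, so $M(x_{n-1},x_n,x_n)=\max\{t_{n-1},t_n\}$. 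If this maximum were $t_n$, then (2.1) would force $\phi(t_n)\le 0$, hence $t_n=0$ and $x_n$ is already a fixed point; otherwise $\psi(t_n)\le\psi(t_{n-1})-\phi(t_{n-1})\le\psi(t_{n-1})$, and monotonicity of $\psi$ yields $t_n\le t_{n-1}$. Thus $\{t_n\}$ decreases to some limit $t\ge 0$; from $0\le\phi(t_{n-1})\le\psi(t_{n-1})-\psi(t_n)\to 0$ and lower semicontinuity of $\phi$ I conclude $\phi(t)=0$, whence $t=0$.

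The main obstacle is proving that $\{x_n\}$ is Cauchy, for which I would argue by contradiction in the standard $G$-metric fashion. Assuming it is not Cauchy, there is an $\epsilon>0$ and indices $m_k>n_k\ge k$ with $G(x_{n_k},x_{m_k},x_{m_k})\ge\epsilon$ while $m_k$ is chosen minimal, so $G(x_{n_k},x_{m_k-1},x_{m_k-1})<\epsilon$. Repeated use of (G5) together with $t_n\to 0$ shows that the quantities $G(x_{n_k},x_{m_k},x_{m_k})$, $G(x_{n_k},x_{n_k+1},x_{m_k})$, $G(x_{n_k},x_{m_k+1},x_{m_k+1})$ and $G(x_{n_k+1},x_{m_k},x_{m_k})$ all tend to $\epsilon$. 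Applying (2.1) to the comparable triple $x_{n_k}\preceq x_{m_k}\preceq x_{m_k}$ gives $\psi(G(x_{n_k+1},x_{m_k+1},x_{m_k+1}))\le\psi(M_k)-\phi(M_k)$ with $M_k\to\epsilon$; passing to the limit with continuity of $\psi$ and lower semicontinuity of $\phi$ yields $\psi(\epsilon)\le\psi(\epsilon)-\phi(\epsilon)$, forcing $\phi(\epsilon)=0$ and hence $\epsilon=0$, a contradiction. Therefore $\{x_n\}$ is Cauchy and, by completeness, converges to some $x^*\in\textbf{X}$.

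Finally I would identify $x^*$ as a fixed point, splitting on hypothesis (1). If $T$ is $G$-continuous, then $x_{n+1}=Tx_n\to Tx^*$ while $x_{n+1}\to x^*$, so $Tx^*=x^*$ by uniqueness of limits. If instead $(\textbf{X},\preceq,G)$ is regular non-decreasing, then $x_n\preceq x^*$ for all $n$, so I may apply (2.1) to the comparable triple $x_n\preceq x^*\preceq x^*$. Setting $r=G(x^*,Tx^*,Tx^*)$ and letting $n\to\infty$, continuity of $G$ makes every entry of $M(x_n,x^*,x^*)$ converge to a quantity bounded by $r$, so $M(x_n,x^*,x^*)\to r$, while $G(x_{n+1},Tx^*,Tx^*)\to r$. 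The limit of (2.1) then gives $\psi(r)\le\psi(r)-\phi(r)$, whence $\phi(r)=0$, $r=0$, and $Tx^*=x^*$. This completes the proof in both cases.
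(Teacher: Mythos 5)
Your proposal is correct and follows essentially the same route as the paper: Picard iteration with $x_{n-1}\preceq x_n\preceq x_n$ to show $G(x_n,x_{n+1},x_{n+1})\to 0$, the standard minimal-index contradiction argument for the Cauchy property, completeness, and the same two-case finish (continuity of $T$ versus regularity with the triple $x_n\preceq x^*\preceq x^*$). The only differences are cosmetic (you apply the contraction to $(x_{n_k},x_{m_k},x_{m_k})$ and control $G(x_{n_k+1},x_{m_k+1},x_{m_k+1})$, while the paper shifts indices down by one to control $G(x_{m_k},x_{m_k},x_{n_k})$ directly), and you handle the degenerate case $t_n=0$ slightly more carefully than the paper does.
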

\begin{proof}
 Let, $x_{n+1}=Tx_n$ for all $n=0,1,2,3,.......$.\\\\ Since T is non-decreasing, then $x_n\preceq~x_{n+1}$ for all $n\geq0$. So, from (2.1) we have,\\\\
$\psi(G(x_n,x_{n+1},x_{n+1}))=\\$
$$\psi(G(Tx_{n-1},Tx_n,Tx_n))\leq\psi(M(x_{n-1},x_n,x_n))-\phi(M(x_{n-1},x_n,x_n)),\eqno{(2.2)}$$
which implies that,
$$\psi(G(x_n,x_{n+1},x_{n+1}))\leq\psi(M(x_{n-1},x_n,x_n)).\eqno{(2.3)}$$
Since $\psi$ is monotone non-decreasing, we get
$$G(x_n,x_{n+1},x_{n+1}))\leq{M(x_{n-1},x_n,x_n)}.\eqno{(2.4)}$$
Now, by using the rectangle property of G, we have\\
$M(x_{n-1},x_n,x_n)~$= max $\{G(x_{n-1},Tx_{n-1},x_n),~G(x_{n-1},Tx_{n-1},x_n),~G(x_{n-1},x_n,x_n),~G(x_n,Tx_n,Tx_n),\\
~~~~~~~~~~~~~~~~~~~~~~~~~~~~~~~~~~~~~~~~~~~~~G(x_n,Tx_n,Tx_n),~\frac{1}{2}[G(x_{n-1},Tx_n,Tx_n)+G(Tx_{n-1},x_n,x_n)]\}$\\
$~~~~~~~~~~~~~~~~~~~~~~~=$ max $\{G(x_n,x_{n+1},x_{n+1}),~G(x_{n-1},x_n,x_n),~\frac{1}{2}G(x_{n-1},x_{n+1},x_{n+1})\}$\\
$~~~~~~~~~~~~~~~~~~~~~~~=$ max $\{G(x_n,x_{n+1},x_{n+1}),~G(x_{n-1},x_n,x_n)\}$\\
as, $\frac{1}{2}G(x_{n-1},x_{n+1},x_{n+1})\leq\frac{1}{2}[G(x_n,x_{n+1},x_{n+1})+G(x_{n-1},x_n,x_n)]$.\\\\
If $G(x_n,x_{n+1},x_{n+1})>G(x_{n-1},x_n,x_n)$, then\\
 $$M(x_{n-1},x_n,x_n)=G(x_n,x_{n+1},x_{n+1})>0\Longrightarrow\phi(G(x_n,x_{n+1},x_{n+1})>0,~~~~~~~~~~~~~~~~~~~~~~~~~~~~~~~~~~~~~~~~~~~~~$$ then from (2.2) we get \\
  $\psi(G(x_n,x_{n+1},x_{n+1}))\leq\psi(G(x_n,x_{n+1},x_{n+1}))-\phi(G(x_n,x_{n+1},x_{n+1})$,
which is a contradiction.\\ So, we have
  $$G(x_n,x_{n+1},x_{n+1})\leq~M(x_{n-1},x_n,x_n)=G(x_{n-1},x_n,x_n)$$
Hence, $\{G(x_{n-1},x_n,x_n)\}$ is a positive, non-decreasing sequence in $\mathbb{R}$, which is bounded below, so it is convergent. So there exists $a\geq0$ such that
  $$\lim_{n\rightarrow\infty}G(x_{n-1},x_n,x_n)=a.~~~~~~~~~~~~~~~~~~~~~~~~~~~~~~~~~~~~~~~~~~~~~~~~~~~~~~~~~~~~~~~~~~~~~~~~~~~~~~~~~~~~~~~~~~~~~~~~~~~~~~~~~~~~\eqno{(2.5)}$$
  Now if, $a>0$ then $\phi(a)>0$, and
  $$\textmd{since, $\phi$ is lower semi-continuous, }\phi(a)\leq{\liminf_{n\rightarrow\infty}G(x_{n-1},x_n,x_n)}~~~~~~~~~~~~~~~~~~~~~~~~~~~~~~~~~~~~~~~~~~~~~~~~~~~~~~~~~~~~~~~~~.$$
so letting $n\rightarrow\infty$ in (2.2), we get
  $$\psi(a)\leq\psi(a)-\phi(a),~~~~~~~~~~~~~~~~~~~~~~~~~~~~~~~~~~~~~~~~~~~~~~~~~~~~~~~~~~~~~~~~~~~~~~~~~~~~~~~~~~~~~~~~~~~~~~~~~~~~~~~~~~~~\eqno{(2.6)}$$
which is a contradiction. So $a=0$ that is
  $$\lim_{n\rightarrow\infty}G(x_{n-1},x_n,x_n)=0.~~~~~~~~~~~~~~~~~~~~~~~~~~~~~~~~~~~~~~~~~~~~~~~~~~~~~~~~~~~~~~~~~~~~~~~~~~~~~~~~~~~~~~~~~~~~~~~~~~~~~~~~~~~~\eqno{(2.7)}$$
  Now Since $G(x_{n-1},x_{n-1},x_n)\leq{2G(x_{n-1},x_n,x_n)}$,
  $$\lim_{n\rightarrow\infty}G(x_{n-1},x_{n-1},x_n)=0.~~~~~~~~~~~~~~~~~~~~~~~~~~~~~~~~~~~~~~~~~~~~~~~~~~~~~~~~~~~~~~~~~~~~~~~~~~~~~~~~~~~~~~~~~~~~~~~~~~~~~~~~~~~~\eqno{(2.8)}$$
Now, we show that $\{x_n\}$ is $G$-cauchy.\\
$~$Suppose that, $\{x_n\}$ is not $G$-cauchy. Then, there exist $\epsilon>0$ and subsequences $\{x_{n_k}\}$ and $\{x_{m_k}\}$ of $\{x_n\}$ with $n_k>m_k>k$ such that,
$$G(x_{m_k},x_{m_k},x_{n_k})\geq\epsilon~~~~~~~~~~~~~~~~~~~\forall~k\in~\mathbb{N}.~~~~~~~~~~~~~~~~~~~~~~~~~~~~~~~~~~~~~~~~~~~~~~~~~~~~~~~~~~\eqno{(2.9)}$$
Furthermore, corresponding to $m_k$, one can choose $n_k$ such that, it is the smallest integer with $n_k>m_k$ satisfying $(2.9)$. Then,
$$G(x_{m_k},x_{m_k},x_{n_k-1})<\epsilon~~~~~~~~~~~~~~~~~~\forall~k\in\mathbb{N}.~~~~~~~~~~~~~~~~~~~~~~~~~~~~~~~~~~~~~~~~~~~~~~~~~~~~~~~~~~\eqno{(2.10)}$$
So by using rectangle inequality and $(2.9)$, $(2.10)$ we get,
$$\epsilon\leq~G(x_{m_k},x_{m_k},x_{n_k})\leq{G(x_{m_k},x_{m_k},x_{n_k-1})+G(x_{n_k-1},x_{n_k-1},x_{n_k})}.\eqno{(2.11)}$$
Taking limit $k\rightarrow\infty$ in (2.11) we have
$$\lim_{n\rightarrow\infty}G(x_{m_k},x_{m_k},x_{n_k})=\epsilon.~~~~~~~~~~~~~~~~~~~~~~~~~~~~~~~~~~~~~~~~~~~~~~~~~~~~~~~~~~~~~~~~~~~~~~~~~~~~~~~~~~~~~~~~~~~~~~~~~~~~~~~~~~~~\eqno{(2.12)}$$
Again, $G(x_{m_k-1},x_{m_k-1},x_{n_k-1})\leq$
$$~~~~~~~~~~~~~~~~G(x_{m_k-1},x_{m_k-1},x_{m_k})+G(x_{m_k},x_{m_k},x_{n_k})+G(x_{n_k-1},x_{n_k},x_{n_k})\eqno{(2.13)}$$
and, $G(x_{m_k},x_{m_k},x_{n_k})\leq$
$$~~~~~~~~~~~~~~~~G(x_{m_k-1},x_{m_k},x_{m_k})+G(x_{m_k-1},x_{m_k-1},x_{n_k-1})+G(x_{n_k-1},x_{n_k-1},x_{n_k}).\eqno{(2.14)}$$
Letting $k\rightarrow\infty$ in $(2.13)$ and $(2.14)$,we get
$$\lim_{k\rightarrow\infty}G(x_{m_k-1},x_{m_k-1},x_{n_k-1})=\lim_{k\rightarrow\infty}G(x_{m_k},x_{m_k},x_{n_k})=\epsilon.~~~~~~~~~~~~~~~~~~~~~~~~~~~~~~~~~~~~~~~~~~~~~~~~~~\eqno{(2.15)}$$
Now, Since, $G(x_{m_k},x_{m_k},x_{n_k})=G(Tx_{m_k-1},Tx_{m_k-1},Tx_{n_k-1})$. So, by $(2.1)$
$$\psi(G(x_{m_k},x_{m_k},x_{n_k}))\leq\psi(M(x_{m_k-1},x_{m_k-1},x_{n_k-1}))-\phi(M(x_{m_k-1},x_{m_k-1},x_{n_k-1}))~~~~~~~~~~~~~~~~~\eqno{(2.16)}$$
Now, $M(x_{m_k-1},x_{m_k-1},x_{n_k-1})$\\  $~~~~~~~~~~$= max $\{G(x_{m_k-1},Tx_{m_k-1},x_{m_k-1}),~G(x_{m_k-1},Tx_{m_k-1},x_{n_k-1}),~G(x_{m_k-1},x_{m_k-1},x_{n_k-1}),\\
~~~~~~~~~~~~~~~~~~~~~G(x_{m_k-1},Tx_{m_k-1},Tx_{m_k-1}),~G(x_{n_k-1},Tx_{n_k-1},Tx_{n_k-1}),\\
~~~~~~~~~~~~~~~~~~\frac{1}{2}[G(x_{m_k-1},Tx_{m_k-1},Tx_{n_k-1})+G(Tx_{m_k-1},x_{m_k-1},x_{n_k-1})]\}$ \\\\
$~~~~~~~~~~$= max $\{G(x_{m_k-1},x_{m_k},x_{m_k-1}),~G(x_{m_k-1},x_{m_k},x_{n_k-1}),~G(x_{m_k-1},x_{m_k-1},x_{n_k-1}),\\
~~~~~~~~~~~~~~~~~~~~~G(x_{m_k-1},x_{m_k},x_{m_k}),~G(x_{n_k-1},x_{n_k},x_{n_k}),\\
~~~~~~~~~~~~~~~~~~~\frac{1}{2}[G(x_{m_k-1},x_{m_k},x_{n_k})+G(x_{m_k},x_{m_k-1},x_{n_k-1})]\}.~~~~~~~~~~~~~~~~~~~~~~~~~~~~~~~~~~~~~~~~~~~~~~
~~~(2.17)$ \\\\
Using Rectangle inequality, we get\\
$G(x_{m_k-1},x_{m_k},x_{n_k-1})\leq{G(x_{m_k-1},x_{m_k-1},x_{m_k})+G(x_{m_k-1},x_{m_k-1},x_{n_k-1})}$,\\
$G(x_{m_k-1},x_{m_k-1},x_{n_k-1})\leq{G(x_{m_k-1},x_{m_k},x_{m_k})+G(x_{m_k-1},x_{m_k},x_{n_k-1})}$,\\
that is, by using (2.8), (2.9) and (2.13), we have
$$\lim_{k\rightarrow\infty}G(x_{m_k-1},x_{m_k},x_{n_k-1})=\epsilon.~~~~~~~~~~~~~~~~~~~~~~~~~~~~~~~~~~~~~~~~~~~~~~~~~~~~~~~~~~~~~~~~~~~~~~~~~~~~~~~~~~~~~~~~~~~~~~~~~~~~~~~~~~~~$$
$$\textmd{Similarly,}~~~~\lim_{k\rightarrow\infty}G(x_{m_k-1},x_{m_k},x_{n_k})=\epsilon.~~~~~~~~~~~~~~~~~~~~~~~~~~~~~~~~~~~~~~~~~~~~~~~~~~~~~~~~~~~~~~~~~~~~~~~~~~~~~~~~~~~~~~~~~~~~~~~~~~~~~~~~~~~~$$
So, by using above inequalities, (2.7),(2.8),(2.15), we get
$$\lim_{k\rightarrow\infty}M(x_{m_k-1},x_{m_k-1},x_{n_k-1})=\epsilon~~~~~~~~~~~~~~~~~~~~~~~~~~~~~~~~~~~~~~~~~~~~~~~~~~~~~~~~~~~~~~~~~~~~~~~~~~~~~~~~~~~~~~~~~~~~~~~~~~~~~~~~~~~~\eqno{(2.18)}$$
therefore, letting $k\rightarrow\infty$ in $(2.16)$ and using $(2.18)$, we have
$$\psi(\epsilon)\leq\psi(\epsilon)-\phi(\epsilon).~~~~~~~~~~~~~~~~~~~~~~~~~~~~~~~~~~~~~~~~~~~~~~~~~~~~~~~~~~~~~~~~~~~~~~~~~~~~~~~~~~~~~~~~~~~~~~~~~~~~~~~~~~~~\eqno{(2.19)}$$
Since $\epsilon>0$, $(2.19)$ leads us to a contradiction.\\\\
Therefore, $\{x_n\}$ is a $G$-cauchy sequence. Since $(\textbf{X},G)$ is complete, there exists $x^*\in\textbf{X}$ such that $x_n\rightarrow~x^*$ as $n\rightarrow\infty$.\\
we claim that $x^*$ is the fixed point of $T$.\\
Case I: if $T$ are continuous, then
$$\lim_{n\rightarrow\infty}G(x_{n+1},x^*,x^*)=G(x^*,x^*,x^*)=0~~~~~~~~~~~~~~~~~~~~~~~~~~~~~~~~~~~~~~~~~~~~~~~~~~~~~~~~~~~~~~~~~~~~~~~~~~~~~~~~~~~~~~~~~~~~~~~~~~~~~~~~~~~~$$
$$\textmd{that is, }\lim_{n\rightarrow\infty}G(Tx_n,x^*,x^*)=G(Tx^*,x^*,x^*)=0.~~~~~~~~~~~~~~~~~~~~~~~~~~~~~~~~~~~~~~~~~~~~~~~~~~~~~~~~~~~~~~~~~~~~~~~~~~~~~~~~~~~~~~~~~~~~~~~~~~~~~~~~~~~~$$
So, $Tx^*=x^*$ that is $x^*$ is a fixed point of $T$.\\
Case II: if $(\textbf{X},\preceq,G)$ is regular non-decreasing, then $x_n\preceq{x^{*}}$. So,
 $$\psi(G(x_{n+1},Tx^*,Tx^*))=\psi(G(Tx_n,Tx^*,Tx^*))\leq\psi(M(x_n,x^*,x^*))-\phi(M(x_n,x^*,x^*)).\eqno{(2.20)}$$
Now, $M(x_n,x^*,x^*)$= max $\{G(x_n,Tx_n,x^*),~G(x_n,Tx_n,x^*),~G(x_n,x^*,x^*),~G(x^*,Tx^*,Tx^*),\\
~~~~~~~~~~~~~~~~~~~~~~~~~~~~~~~~~~~~~~~~~~~~~~~~~~~~~~~~G(x^*,Tx^*,Tx^*),~\frac{1}{2}[G(x_n,Tx^*,Tx^*)+G(Tx_n,x^*,x^*)]\}$ \\
$~~~~~~~~~~~~~~~~~~~~~~~~~~$= max $\{G(x_n,x_{n+1},x^*),~G(x_n,x^*,x^*),~G(x^*,Tx^*,Tx^*),\\
~~~~~~~~~~~~~~~~~~~~~~~~~~~~~~~~~~~~~~~~~~~~~~~~~~~~~~~\frac{1}{2}[G(x_n,Tx^*,Tx^*)+G(x_{n+1},x^*,x^*)]\}.~~~~~~~~~~~~~~~~~~~~~~~~~~~~(2.21)$

$$\textmd{Therefore,}~~~\lim_{n\rightarrow\infty}M(x_n,x^*,x^*)= G(x^*,Tx^*,Tx^*).~~~~~~~~~~~~~~~~~~~~~~~~~~~~~~~~~~~~~~~~~~~~~~~~~~~~~~~~~~~~~~~~~~~~~~~~~~~~~~~~~~~~~~~~~~\eqno{(2.22)}$$
If $G(x^*,Tx^*,Tx^*)>0$, letting $n\rightarrow\infty$ in $(2.20)$ give
$$\psi(G(x^*,Tx^*,Tx^*))\leq\psi(G(x^*,Tx^*,Tx^*))-\phi(G(x^*,Tx^*,Tx^*))$$
which is a contradiction. So, $G(x^*,Tx^*,Tx^*)=0\Longrightarrow~Tx^*=x^*$. Hence T has a fixed point.
\end{proof}

\begin{exmp}
\textbf{(Existence of fixed point in case of not continuous function)} Let $\textbf{X}=[0,1]$ and $x\preceq{y}$ iff $x\leq{y}$ and defined G-metric on \textbf{X} by $G(x,y,z)$=max$\{|x-y|,|y-z|,|z-x|\}$. Then $(\textbf{X},\preceq,G)$ is complete partially ordered G-metric space. Consider the mapping $T:\textbf{X}\rightarrow\textbf{X}$ by
$T(x)=0$ for $x\in[0,1)$ and T(1)=$\frac{1}{4}$. Then $G(Tx,Ty,Tz)\leq~\frac{3}{4}M(x,y,z)~~~~\forall~x\preceq{y}\preceq{z}$. So by taking $\psi(t)=t$ and $\phi(t)=\frac{1}{2}t$, and T satisfies all conditions in the previous theorem. Notice that T is not G-continuous and has a fixed point at 0.
\end{exmp}
By following examples we will show that if we violet any one of required conditions of the above theorem then $T$ may not have a fixed point.
\begin{exmp}
Let $\textbf{X}=[0,1]$ and let $x\preceq{y}$ iff either $x=y$ or $xy(x-y)>0$. Now defined G-metric on \textbf{X} by $G(x,y,z)$=max $\{|x-y|,|y-z|,|z-x|\}$. Then $(\textbf{X},\preceq,G)$ is complete partially ordered G-metric space and if $x\neq0$ then $x$ and 0 are not comparable. Consider the mapping $T:\textbf{X}\rightarrow\textbf{X}$ by\\ $T(x)=0$ for $x\in(0,1]$ and T(0)=$\frac{1}{4}$. Then $G(Tx,Ty,Tz)\leq~\frac{1}{2}M(x,y,z)~~~~\forall~x\preceq{y}\preceq{z}$. So by taking $\psi(t)=t$ and $\phi(t)=\frac{1}{4}t$, \textbf{T satisfies only  conditions 2, 3 and 4 of Theorem 2.1. Here, T has no fixed point in $\textbf{X}$}.
\end{exmp}
\begin{exmp}
Let $\textbf{X}=\{2,3,4\}$ and $x\preceq{y}$ iff $x\mid{y}$ and defined G-metric on \textbf{X} by $G(x,y,z)$=max$\{|x-y|,|y-z|,|z-x|\}$. Then $(\textbf{X},\preceq,G)$ is complete partially ordered G-metric space and 3 is not comparable with 2 and 4. Let us consider the mapping $T:\textbf{X}\rightarrow\textbf{X}$ by T(2)=T(3)=4  and T(4)=3. Then $G(Tx,Ty,Tz)\leq~\frac{1}{2}M(x,y,z)~~~~\forall~x\preceq{y}\preceq{z}$. So by taking $\psi(t)=t$ and $\phi(t)=\frac{1}{4}t$, \textbf{T satisfies only conditions 1, 3 and 4 of Theorem 2.1. Here, T has no fixed point in $\textbf{X}$}.
\end{exmp}
\begin{exmp}
Let $\textbf{X}=\{2,3\}$ and $x\preceq~y$ iff $x\mid{y}$ and defined G-metric by $G(x,y,z)$=max $\{|x-y|,|y-z|,|z-x|\}$. Then $(\textbf{X},\preceq,G)$ is complete partially ordered G-metric space and 2 is not comparable with 3. Consider the mapping $T:\textbf{X}\rightarrow\textbf{X}$ by T(2)=3 and T(3)=2. Then $G(Tx,Ty,Tz)=0~~~~\forall~x\preceq{y}\preceq{z}$. Therefore taking $\psi(t)=t$ and $\phi(t)=\frac{1}{2}t$, \textbf{T satisfies only conditions 1, 2 and 4 of Theorem 2.1. Here, T has no fixed point in $\textbf{X}$}.
\end{exmp}
\textbf{Next example shows that the same conclusion may not hold if $M(x,y,z)$ is replaced by}\\
$~M_{1}(x,y,z)$= max $\{G(x,Tx,y),~G(x,Tx,z),~G(x,y,z),~G(y,Ty,Ty),\\
~~~~~~~~~~~~~~~~~~~~~~~~~~~~~~~~~~~~~~~~~~~~~~~~~~~~~~~~G(z,Tz,Tz),~G(x,Ty,Tz),~G(Tx,y,z)\}$
\begin{exmp}Let $\textbf{X}=\{2^n~|~n\in\mathbb{N}\}$ and $x\preceq~y$ iff $x\mid{y}$. Now defined G-metric on \textbf{X} by $G(x,y,z)$= max $\{|x-y|,|y-z|,|z-x|\}$. Then $(\textbf{X},\preceq,G)$ is complete partially ordered G-metric space. Consider the mapping $T:\textbf{X}\rightarrow\textbf{X}$ by $T(2^n)=2^{n+1}~~~\forall~n\in\mathbb{N}$. Then $M_{1}(x,y,z)-G(Tx,Ty,Tz)\geq2~~\forall~x\preceq{y}\preceq{z}$. Therefore taking $\psi(t)=t$ and $\phi(t)=\left\{
\begin{array}{ll}
\frac{1}{2}t & \mbox{if } x\in[0,2]\\
1 & \mbox{if } x\in(2,\infty)
\end{array}
\right.$
 , T satisfies all conditions in the previous theorem, with $M(x,y,z)$ replaced by $M_{1}(x,y,z)$. Obviously the mapping T has no fixed point in $\textbf{X}$.
\end{exmp}
\begin{cor}
Let T satisfy the conditions of Theorem 2.1, except the contraction defined in condition 4 is replaced by the following: for all comparable $x\preceq{y}\preceq{z}$ in $\textbf{X}$ there
exists a positive Lebesque integrable function $\varphi$ on $\mathbb{R}$ such that $\int^\epsilon_0{\varphi}>0$ for each  $\epsilon>0$ and that
$$\int^{\psi(G(Tx,Ty,Tz))}_0{\varphi(t)}dt\leq{\int^{\psi(M(x,y,z))}_0{\varphi(t)}dt-\int^{\phi(M(x,y,z))}_0{\varphi(t)}dt}.$$
Then T has a fixed point.
\end{cor}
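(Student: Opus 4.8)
The plan is to reduce the integral statement to Theorem 2.1 by absorbing the integral into a new pair of control functions. Define $\Lambda(u)=\int_0^u\varphi(t)\,dt$ for $u\ge 0$, and set $\widetilde\psi=\Lambda\circ\psi$ and $\widetilde\phi=\Lambda\circ\phi$. Since $\int_0^{\psi(M)}\varphi-\int_0^{\phi(M)}\varphi=\Lambda(\psi(M))-\Lambda(\phi(M))=\widetilde\psi(M)-\widetilde\phi(M)$ and $\int_0^{\psi(G(Tx,Ty,Tz))}\varphi=\widetilde\psi(G(Tx,Ty,Tz))$, the hypothesis becomes exactly
$$\psi\big(G(Tx,Ty,Tz)\big)\ \longrightarrow\ \widetilde\psi\big(G(Tx,Ty,Tz)\big)\le \widetilde\psi\big(M(x,y,z)\big)-\widetilde\phi\big(M(x,y,z)\big)$$
for all comparable $x\preceq y\preceq z$. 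Thus it suffices to check that $\widetilde\psi\in\Psi$ and $\widetilde\phi\in\Phi$, after which Theorem 2.1 applies verbatim and produces a fixed point.

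Next I would verify $\widetilde\psi\in\Psi$. The map $\Lambda$ is non-decreasing because $\varphi$ is positive, and it is continuous because the Lebesgue integral of an integrable function is absolutely continuous in its upper limit; composing with the continuous non-decreasing $\psi$ keeps $\widetilde\psi$ continuous and non-decreasing. For the zero condition, $\widetilde\psi(0)=\Lambda(\psi(0))=\Lambda(0)=0$, while for $t>0$ we have $\psi(t)>0$, and the assumption $\int_0^\epsilon\varphi>0$ for every $\epsilon>0$ gives $\widetilde\psi(t)=\int_0^{\psi(t)}\varphi>0$; hence $\widetilde\psi(t)=0$ iff $t=0$.

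The remaining point, and the one genuinely requiring care, is that $\widetilde\phi\in\Phi$, i.e. that $\widetilde\phi$ is lower semi-continuous. The zero condition is immediate as above: $\widetilde\phi(0)=0$ and $\widetilde\phi(t)=\int_0^{\phi(t)}\varphi>0$ for $t>0$. For lower semicontinuity I would argue on superlevel sets: for any real $a$, continuity of $\Lambda$ makes $\{u\ge 0:\Lambda(u)>a\}$ relatively open, and monotonicity of $\Lambda$ makes it an up-interval, hence of the form $(c,\infty)\cap[0,\infty)$; therefore $\{t:\widetilde\phi(t)>a\}=\{t:\phi(t)>c\}$, which is open because $\phi$ is lower semi-continuous. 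Thus $\widetilde\phi$ is lower semi-continuous, so $\widetilde\phi\in\Phi$.

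With both memberships established, the displayed inequality is precisely condition (4) of Theorem 2.1 for the pair $(\widetilde\psi,\widetilde\phi)$, and all the remaining hypotheses (completeness, the order structure, monotonicity and continuity or regularity of $T$, and the existence of $x_0$ with $x_0\preceq Tx_0$) are inherited unchanged. Applying Theorem 2.1 then yields a fixed point of $T$. The only nontrivial obstacle is the lower semicontinuity of the composition $\Lambda\circ\phi$; everything else is a routine transfer of the integral into the control functions.
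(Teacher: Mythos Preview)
Your proof is correct and follows exactly the same route as the paper: define $\Lambda(u)=\int_0^u\varphi(t)\,dt$ (the paper calls it $\tau$), set $\psi_1=\Lambda\circ\psi$, $\phi_1=\Lambda\circ\phi$, and invoke Theorem~2.1. The paper's own proof is much terser and does not explicitly verify that $\psi_1\in\Psi$ and $\phi_1\in\Phi$; your careful check of the lower semicontinuity of $\Lambda\circ\phi$ via superlevel sets fills in a detail the paper leaves implicit.
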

\begin{proof}
Consider the function,$~~~~~~~~~~\tau(x)=\int^x_0{\varphi(x)}dx$,\\
then the above contraction reduces to
$$(\tau\circ\psi)(G(Tx,Ty,Tz))\leq{(\tau\circ\psi)(M(x,y,z))-(\tau\circ\phi)(M(x,y,z))},$$
so taking $\tau\circ\psi=\psi_1$ and $\tau\circ\phi=\phi_1$ and using Theorem 2.1 we obtain proof.

\end{proof}
Before the next result we will prove following lemmas.
\begin{lem}
\textit{Let $f,g~:~X\rightarrow[0,\infty)$ be two functions such that $f(x)\leq~g(x)~\forall~x\in~X$ then for a $\psi\in\Psi$ and $\phi\in\Phi$ with $\psi(x)\geq\phi(x)$ there exists $\phi_1\in\Phi$ such that $\psi(f(x))-\phi(f(x))\leq\psi(g(x))-\phi_1(g(x))$.}
\end{lem}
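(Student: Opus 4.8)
The plan is to exhibit a single function $\phi_1$ that works simultaneously for every $x$ and, pleasantly, depends only on $\psi$ and $\phi$ (not on $f$, $g$, or $X$). Rearranging the desired conclusion, what I need for each $x$ is
\[
\phi_1(g(x)) \le \psi(g(x)) - \psi(f(x)) + \phi(f(x)),
\]
and since $f(x) \le g(x)$, the ``inner'' value $s=f(x)$ ranges only over $[0,g(x)]$. This dictates the definition: for every $t \ge 0$ set
\[
\phi_1(t) = \inf_{0 \le s \le t}\bigl[\psi(t) - \psi(s) + \phi(s)\bigr] = \psi(t) - \sup_{0 \le s \le t}\bigl[\psi(s) - \phi(s)\bigr].
\]
Writing $P(t) = \sup_{0 \le s \le t}[\psi(s)-\phi(s)]$, so that $\phi_1 = \psi - P$, the required inequality becomes automatic: choosing $s = f(x) \in [0,g(x)]$ in the infimum gives $\phi_1(g(x)) \le \psi(g(x)) - \psi(f(x)) + \phi(f(x))$, which is exactly $\psi(f(x)) - \phi(f(x)) \le \psi(g(x)) - \phi_1(g(x))$. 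Thus the entire content of the lemma reduces to verifying that this $\phi_1$ lies in $\Phi$.

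Next I would dispatch the easy membership conditions. For $0 \le s \le t$ one has $\psi(s) - \phi(s) \le \psi(s) \le \psi(t)$ (using $\phi \ge 0$ and $\psi$ non-decreasing), so $P(t) \le \psi(t)$ and hence $\phi_1 \ge 0$; taking $s=0$ shows $P(t) \ge \psi(0)-\phi(0)=0$, so also $\phi_1(t) \le \psi(t)$, and in particular $\phi_1(0)=0$. The substantive normalization point is the strict positivity $\phi_1(t) > 0$ for $t>0$. Here the key observation is that $u := \psi - \phi$ is upper semi-continuous (a continuous function minus a lower semi-continuous one), hence attains its maximum on the compact interval $[0,t]$, say at $s^{*}$, so $P(t) = \psi(s^{*}) - \phi(s^{*})$. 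If $s^{*}=0$ then $P(t)=0<\psi(t)$; if $s^{*}>0$ then $\phi(s^{*})>0$ and $P(t) = \psi(s^{*}) - \phi(s^{*}) \le \psi(t) - \phi(s^{*}) < \psi(t)$. Either way $P(t) < \psi(t)$, i.e. $\phi_1(t)>0$. (The standing hypothesis $\psi \ge \phi$ guarantees $u \ge 0$, which keeps everything non-negative, though the positivity of $\phi_1$ really rests on $\phi$ vanishing only at $0$.)

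The remaining step is the one I expect to be the main obstacle: the lower semi-continuity of $\phi_1$. Since $\psi$ is continuous, this is equivalent to \emph{upper} semi-continuity of the running supremum $P(t) = \sup_{0\le s \le t} u(s)$ of the usc function $u$, and running suprema require a short compactness argument. I would argue directly: $P$ is non-decreasing, and given $t_n \to t_0$, pick maximizers $s_n \in [0,t_n]$ with $P(t_n) = u(s_n)$; passing to a subsequence along which $P(t_n)$ tends to $\limsup_n P(t_n)$ and $s_n \to s^{*}$, the constraints $s_n \le t_n$ force $s^{*} \le t_0$, so $s^{*}\in[0,t_0]$, and upper semi-continuity of $u$ yields $\limsup_n P(t_n) = \lim_n u(s_n) \le u(s^{*}) \le P(t_0)$. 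Hence $\limsup_{t\to t_0} P(t) \le P(t_0)$, so $P$ is usc and $\phi_1 = \psi - P$ is lsc, completing the verification that $\phi_1 \in \Phi$. Only this semicontinuity juggling and the clean use of the attained maximum in the positivity argument are delicate; everything else is bookkeeping.
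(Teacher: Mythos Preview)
Your proof is correct and takes a genuinely different, cleaner route than the paper. You define $\phi_1(t)=\psi(t)-\sup_{0\le s\le t}[\psi(s)-\phi(s)]$ so that the required inequality is built into the definition, and then verify $\phi_1\in\Phi$ by exploiting that $u=\psi-\phi$ is upper semi-continuous: the attained maximum of $u$ on $[0,t]$ yields strict positivity of $\phi_1(t)$ for $t>0$, and a compactness/subsequence argument on the maximizers gives upper semi-continuity of the running supremum $P$, hence lower semi-continuity of $\phi_1=\psi-P$. The paper instead fixes an auxiliary $\alpha>0$, partitions $(0,\infty)$ into the intervals $(\frac{\alpha}{n+1},\frac{\alpha}{n}]$, $(\alpha,2\alpha)$ and $[(n+1)\alpha,(n+2)\alpha)$, and sets $\phi_1$ to be a piecewise-constant step function whose value on each piece is a nested infimum of positive constants $a_n,b_n,c_n$ extracted from $\psi$ and $\phi$ on those pieces. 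Your construction is canonical (it depends only on $\psi$ and $\phi$, not on $f,g,X$ or an arbitrary parameter) and is in fact the pointwise largest $\phi_1$ that works for every pair $f\le g$; the paper's approach is more hands-on and ends by asserting both the lower semi-continuity of its step function and the final inequality without further argument.
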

\begin{proof}
Take $\alpha>0$. Let $\psi(\alpha)=\epsilon$.
  Then $\exists~a_1>0$ such that $$\sup_{x\in[0,\alpha]}(\psi(x)-\phi(x))\leq\epsilon-a_1,$$
  otherwise, if $\exists$ no $a_1>0$ then $$\sup_{x\in[0,\alpha]}(\psi(x)-\phi(x))=\epsilon$$and since $\psi$ is nondecreasing,  $x\rightarrow\alpha$ implies $\phi(x)\rightarrow0$, which is a contradiction.\\
  Similarly for each $\frac{\alpha}{n},~~~~\exists~a_n>0$ such that$$\sup_{x\in[0,\frac{\alpha}{n}]}(\psi(x)-\phi(x))\leq\psi(\frac{\alpha}{n})-a_n.$$
  Let$$b=\inf_{x\in(\alpha,2\alpha)}\phi(x),~~~~~\textmd{and}~~~~~~b_n=\inf_{x\in(\frac{\alpha}{n+1},\frac{\alpha}{n}]}\phi(x),~~c_n=\inf_{x\in[(n+1)\alpha,(n+2)\alpha)}\phi(x)~~~~~~~~\forall~n\in\mathbb{N}$$
  Now if $b_n=0$ for some $n\in\mathbb{N}$ then there is a sequence $\{x_n\}$ in $(\frac{\alpha}{n+1},\frac{\alpha}{n}]$  such that $\phi(x_n)\rightarrow0$ as $n\rightarrow\infty$. Then $\exists$ a converging subsequence $\{x_{n_k}\}$ of $\{x_n\}$. Let $$\lim_{k\rightarrow\infty}x_{n_k}=x\in[\frac{\alpha}{n+1},\frac{\alpha}{n}].$$ Then as $\phi$ is lower semi-continuous and $\phi(x)=0$ iff $x=0$, $$\phi(x)\leq\lim_{k\rightarrow\infty}\phi(x_{n_k})=0\Longrightarrow\phi(x)=0,$$ which is a contradiction. Hence $b,b_n,c_n>0~~\forall~n\in\mathbb{N}$.\\
  Now define $\phi_1~:[0,\infty)\rightarrow[0,\infty)$ by\\
  $\phi_1(x)=
  \left\{
	\begin{array}{llll}
		0  & \mbox{if } x=0 \\
		\inf\{b,a_1\} & \mbox{if } x\in(\alpha,2\alpha)\\
        \inf\{b,a_1,b_1,a_2,....,b_n,a_{n+1}\} & \mbox{if } x\in(\frac{\alpha}{n+1},\frac{\alpha}{n}]~~n\in\mathbb{N}\\
        \inf\{b,a_1,c_1,....c_n\} & \mbox{if } x\in[(n+1)\alpha,(n+2)\alpha)~~n\in\mathbb{N}
	\end{array}
\right.
$\\
Then $\phi_1$ is lower semi-continuous and $\phi_1(x)\leq\phi(x)$ and also $\psi(f(x))-\phi(f(x))\leq\psi(g(x))-\phi_1(g(x))$.
\end{proof}
\begin{lem}
\textit{Let $(\textbf{X},d)$ be a metric space and T : $\textbf{X}\rightarrow\textbf{X}$ be a mapping satisfying (\(\psi,\phi\))-weak contraction with $\psi$ is continuous, nondecreasing and $\phi$ is lower semicontinous, then there exists another $\phi_{2}\in\Phi$ with $\phi_{2}(x)\leq\psi(x)$ and $\phi_{2}(x)\leq\phi(x)$ for all $x\in[0,\infty).$}
\end{lem}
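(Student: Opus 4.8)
The plan is to take the pointwise minimum $\phi_2 := \min\{\psi,\phi\}$ as the required function, this being the largest function dominated by both $\psi$ and $\phi$, and then to verify that it lands in $\Phi$. First I would record what the hypotheses actually give about $\psi$ and $\phi$ individually. By the definition of a $(\psi,\phi)$-weak contraction in $(1.2)$, the pair satisfies $\psi(0)=\phi(0)=0$ together with $\psi(t)>0$ and $\phi(t)>0$ for every $t>0$; combined with the stated continuity and monotonicity of $\psi$ and the lower semicontinuity of $\phi$, this places $\psi\in\Psi$ and $\phi\in\Phi$. In particular both functions are nonnegative, so $\phi_2$ maps $[0,\infty)$ into $[0,\infty)$, and the two desired inequalities $\phi_2(x)\le\psi(x)$ and $\phi_2(x)\le\phi(x)$ hold immediately by the definition of the minimum.

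Next I would check the two defining properties of membership in $\Phi$. For the condition $\phi_2(t)=0$ iff $t=0$: at $t=0$ we have $\phi_2(0)=\min\{\psi(0),\phi(0)\}=0$, while for $t>0$ both $\psi(t)>0$ and $\phi(t)>0$ force $\phi_2(t)=\min\{\psi(t),\phi(t)\}>0$. The remaining and only substantial point is lower semicontinuity of $\phi_2$. For this I would use the superlevel-set characterisation: a function $h$ is lower semicontinuous precisely when $\{\,t:h(t)>a\,\}$ is open for every $a$. Since $\{\,t:\phi_2(t)>a\,\}=\{\,t:\psi(t)>a\,\}\cap\{\,t:\phi(t)>a\,\}$ and each of these superlevel sets is open ($\psi$ being continuous, hence lower semicontinuous, and $\phi$ being lower semicontinuous by hypothesis), their intersection is open, so $\phi_2$ is lower semicontinuous.

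The single place that requires care is this lower semicontinuity of the minimum, and the point to stress is that the pointwise minimum of two lower semicontinuous functions is again lower semicontinuous (a property that would fail for the maximum, or for an infinite family). If one prefers a sequential argument in place of the superlevel-set one, it also works: given $t_n\to t_0$, pass to a subsequence realising $\liminf_n\phi_2(t_n)=:L$ and then to a further subsequence on which the minimum is attained by a fixed one of the two functions; continuity of $\psi$ in the one case and lower semicontinuity of $\phi$ in the other each yield $L\ge\phi_2(t_0)$. Either way $\phi_2\in\Phi$, and together with the two inequalities already noted this gives the claim. As a remark, since $\phi_2\le\phi$ one has $\psi(d(Tx,Ty))\le\psi(d(x,y))-\phi(d(x,y))\le\psi(d(x,y))-\phi_2(d(x,y))$, so $T$ automatically remains a $(\psi,\phi_2)$-weak contraction, which is presumably the intended use of the lemma.
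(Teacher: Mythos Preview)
Your proof is correct, and it is considerably simpler than the paper's. You take $\phi_2=\min\{\psi,\phi\}$ and verify membership in $\Phi$ directly, the only nontrivial point being lower semicontinuity, which you dispatch with the superlevel-set identity $\{\phi_2>a\}=\{\psi>a\}\cap\{\phi>a\}$. The paper instead decomposes the set $\{t:\phi(t)>\psi(t)\}$ into its connected components---a countable family of disjoint open intervals---and defines $\phi_2$ piecewise: equal to $\phi$ where $\phi\le\psi$, equal to $\psi$ on each bounded component $(p,q)$, and equal to the constant value $\phi(r)$ on the (at most one) unbounded component $(r,\infty)$. On bounded components this agrees with your $\min\{\psi,\phi\}$, while on the unbounded component it may be strictly smaller; either way the paper must then argue lower semicontinuity of this patchwork construction, which is more delicate than your one-line argument. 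Your route is more economical, treats all cases uniformly, and yields the largest possible $\phi_2$ satisfying the two inequalities.

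One small slip in your aside: the pointwise \emph{maximum} of lower semicontinuous functions is always lower semicontinuous (indeed the supremum of any family is), so the parenthetical ``would fail for the maximum'' is not right; what can fail is the \emph{infimum} of an infinite family. This does not affect your main argument.
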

\begin{proof}
If $\phi(x)\leq\psi(x),\forall{x\in[0,\infty)}$, then by taking $\phi_{2}=\phi$, it is done. So let, there exists some $x\in[0,\infty)$ such that $\phi(x)>\psi(x)$. As $\phi(0)=\psi(0)=0$ and $\phi$ is lower semi-continuous\\
\textbf{Case I:} $\exists$ an interval $(a,b)$ containing $x$ such that $\phi(y)>\psi(y)~\forall~y\in(a,b)$ and $\phi(z)\leq\psi(z)$ whenever $z=a,b$.\\
\textbf{Case II:} $\exists$ an interval $(a,\infty)$ containing $x$ such that $\phi(y)>\psi(y)~\forall~y\in(a,\infty)$ and $\phi(a)\leq\psi(a)$.\\
Now in case I if for another $x_1\in[0,\infty)$ there is an interval $(c,d)$, then either $a=c,~b=d$ or, $(a,c)$ and $(b,d)$ are disjoint, otherwise $\exists~z\in\{a,b,c,d\}$ and $z\in(a,b)\cup(c,d)$ such that $\phi(z)\leq\psi(z)$ which is a contradiction as $z\in(a,b)\cup(c,d)$ implies $\phi(z)>\psi(z)$.\\
And in case II there is at most one such interval for $\phi,\psi$, otherwise if there is another interval $(e,\infty)$ for some $x_2\in[0,\infty)$ then either $a\in(e,\infty)$ implies $\phi(a)\leq\psi(a)$, a contradiction, or $e\in(a,\infty)$ implies $\phi(e)\leq\psi(e)$, a contradiction, and its disjoint with interval described in  case I. As, if they are not disjoint $\exists~z\in(a,\infty)$ where $\phi(z)\leq\psi(z)$, which is again a contradiction.\\
Now let $A$=\{$x\in[0,\infty)$:there exists intervals containing $x$ described as in case I\},  and\\
~~~~~~~$B$=\{$x\in[0,\infty)$:there exists intervals containing $x$ described as in case II\}.\\
So then for $\psi,\phi$, $\exists$ a countable set $\Lambda_1$ of disjoint intervals $(p,q)$ of type case I such that for each $x\in~A~\exists~(p,q)\in\Lambda_1$ containing it, and a  set $\Lambda_2$ consisting at most one interval $(r,\infty)$ of type case II such that for each $x\in~B,~x\in(r,\infty)\in\Lambda_1$\\
Now define $\phi_{2}~:[0,\infty)\rightarrow[0,\infty)$ as\\
$\phi_{2}(x)=
\left\{
\begin{array}{lll}
		\phi(x)  & \mbox{if } x\in~[0,\infty)-\{x\in[0,\infty):\phi(x)>\psi(x)\}\\
		\psi(x) & \mbox{if } x\in~(p,q)\in~\Lambda_1\\
        \phi(r) & \mbox{if } x\in~(r,\infty)\in~\Lambda_2
	\end{array}
\right.
$\\
Then $\phi_2$ is lower semi-continuous and $\phi_{2}(x)\leq\phi(x)$ as well as $\phi_{2}(x)\leq\psi(x)$ for all $x\in[0,\infty)$. Hence we can assume for ($\psi,\phi$)-weak contraction with $\psi$ is continuous, nondecreasing and $\phi$ is lower semicontinous, $\psi(x)\geq\phi(x)$ and $\phi$ is continuous at 0.
\end{proof}
\begin{thm}
\textit{Let $(\textbf{X},\preceq,G)$ be a complete partially ordered G-metric space and let T : $\textbf{X}\rightarrow\textbf{X}$ be a nondecreasing map such that $x_0\preceq{Tx_0}$ for some $x_0\in\textbf{X}$. Suppose that there exists $\psi\in\Psi$ and $\phi\in\Phi$ such that for all $x\preceq{y}\preceq{z}$ in $\textbf{X}$}
$$\psi(G(Tx,Ty,Tz))\leq\psi(G(x,y,z))-\phi(G(x,y,z)).$$
\textit{Now if either T is G-continuous or $(\textbf{X},\preceq,G)$ is nondecreasing then T has a fixed point in \textbf{X}.}
\end{thm}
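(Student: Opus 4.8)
The plan is to deduce this statement directly from Theorem 2.1 rather than to rerun the whole Picard-iteration argument. The key observation is that for every $x,y,z$ one has $G(x,y,z)\le M(x,y,z)$, because $G(x,y,z)$ occurs as one of the entries in the maximum defining $M(x,y,z)$. Thus the present hypothesis, which bounds $\psi(G(Tx,Ty,Tz))$ by $\psi(G(x,y,z))-\phi(G(x,y,z))$, is a priori stronger information attached to a smaller argument, and the whole task is to convert it into the generalized $M$-contraction (2.1) demanded by condition (4) of Theorem 2.1, at the price of replacing $\phi$ by a possibly smaller control function.

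First I would arrange that $\psi\ge\phi$ pointwise. This is not assumed, but Lemma 2.4 manufactures a $\phi_2\in\Phi$ with $\phi_2\le\phi$ and $\phi_2\le\psi$ on $[0,\infty)$. Since $\phi_2\le\phi$ yields $\psi(G(x,y,z))-\phi(G(x,y,z))\le\psi(G(x,y,z))-\phi_2(G(x,y,z))$, the hypothesis still holds with $\phi_2$ in place of $\phi$. Hence, after renaming, I may assume from the outset that $\psi\ge\phi$, which is exactly the standing hypothesis required to invoke Lemma 2.3.

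Next I would apply Lemma 2.3 with $f(x,y,z)=G(x,y,z)$ and $g(x,y,z)=M(x,y,z)$, viewed as functions on the set of comparable triples $x\preceq y\preceq z$, using the pointwise bound $f\le g$ noted above. The lemma produces $\phi_1\in\Phi$ with
$$\psi(G(x,y,z))-\phi(G(x,y,z))\le\psi(M(x,y,z))-\phi_1(M(x,y,z)).$$
Chaining this with the hypothesis gives $\psi(G(Tx,Ty,Tz))\le\psi(M(x,y,z))-\phi_1(M(x,y,z))$ for all comparable $x\preceq y\preceq z$, which is precisely condition (4) of Theorem 2.1 for the pair $(\psi,\phi_1)$. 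The remaining hypotheses transfer verbatim: $T$ is nondecreasing, $x_0\preceq Tx_0$ holds for some $x_0$, and the dichotomy ``$T$ is $G$-continuous or $(\textbf{X},\preceq,G)$ is regular non-decreasing'' is exactly condition (1). Theorem 2.1 then supplies the fixed point.

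The step I expect to require the most care is the reduction to $\psi\ge\phi$: Lemma 2.3 genuinely needs it, and without first passing through Lemma 2.4 the quantity $\psi(G)-\phi(G)$ could fail to be controlled in the right direction, so the whole chain of inequalities would be useless. I would also verify that the construction of $\phi_1$ in Lemma 2.3 depends only on $\psi$, $\phi$ and the pointwise inequality $f\le g$, and not on any metric structure of the domain, so that it is legitimate to feed it the three-variable functions $G$ and $M$; this is the only point at which one must confirm that the lemma is being applied strictly within its stated hypotheses.
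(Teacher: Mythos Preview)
Your proposal is correct and follows essentially the same route as the paper: observe $G(x,y,z)\le M(x,y,z)$, use Lemma 2.4 to reduce to the case $\phi\le\psi$, then apply Lemma 2.3 to upgrade the $G$-contraction to an $M$-contraction with a new $\phi_1\in\Phi$, and conclude by Theorem 2.1. Your write-up is in fact more careful than the paper's, which simply says ``by the above two lemmas'' without spelling out why Lemma 2.4 is needed before Lemma 2.3 can be invoked.
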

\begin{proof}
Since $G(x,y,z)\leq{M(x,y,z)}~~\forall~x,y,z\in{\textbf{X}}$, so by above two lemmas $\exists$ $\phi_{1}\in\Phi$ such that
$$\psi(G(Tx,Ty,Tz))\leq\psi(G(x,y,z))-\phi(G(x,y,z))\leq\psi(M(x,y,z))-\phi_{1}(M(x,y,z)).$$
Therefore, by Theorem 2.1,  T has a fixed point.
\end{proof}
By the following example we will show that Theorem 2.1 is generalization of Theorem 2.5 and if we transform metric to G-metric, it is not equivalent to the corollary 3.3 in \cite{SRZK}.\\
\begin{exmp}
Let $\textbf{X}$=$[0,1]$, and  $x\preceq{y}$ implies $x\geq{y}$ and define metric on \textbf{X} by $d(x,y)=|x-y|$. Then
$G(x,y,z)$= max$\{|x-y|,|y-z|,|z-x|\}$ is G-metric on \textbf{X} and $d(x,y)=G(x,y,y)$.\\
Therefore $(\textbf{X},\preceq,d)$ is complete partially ordered metric space and also $(\textbf{X},\preceq,G)$ is complete partially ordered G-metric space. Let $T:[0,1]\rightarrow[0,1]$ defined by\\
$Tx=\left\{
           \begin{array}{ll}
             2x+\frac{1}{16} & \mbox{if } 0\leq~x\leq\frac{7}{32}\\
             \frac{16}{25}x+\frac{9}{25} & \mbox{if } \frac{7}{32}<x\leq1
           \end{array}
         \right.
         $\\
Then, T is nondecreasing and continuous.\\
Let $\psi(t)=t$ and $\phi(t)=\frac{1}{32}t$\\Let $x\preceq~y\preceq~z$.\\
Then, $G(Tx,Ty,Tz)$= max$\{|Tx-Ty|,|Ty-Tz|,|Tz-Tx|\}=|Tx-Tz|$ as $x\geq~y\geq~z$,\\
and $M(x,y,z)$= max$\{|x-Tx|,|y-Ty|,|z-Tz|,|x-y|,|x-z|,|y-z|,|y-Tx|,|z-Tx|,\frac{1}{2}[G(x,Ty,Tz)+G(Tx,y,z)]\}$.
As, $T$ is monotone increasing, $|Tx-z|\leq~M(x,y,z)\leq|1-z|$\\
Now Let $A=M(x,y,z)-G(Tx,Ty,Tz)\geq~|Tx-z|-|Tx-Tz|\geq0$ as $Tz\geq~z$.\\
Then, $A\geq
\left\{
           \begin{array}{ll}
             z+\frac{1}{16} & \mbox{if } 0\leq~z\leq\frac{7}{32}\\
             \frac{9}{25}(1-z) & \mbox{if } \frac{7}{32}<z\leq1
           \end{array}
         \right.
         $\\
that is $A\geq\frac{1}{32}|1-z|\geq\frac{1}{32}M(x,y,z)=\phi(M(x,y,z))$.\\
Hence, $\psi(M(x,y,z))-\psi(G(Tx,Ty,Tz))=A\geq\phi(M(x,y,z))$  as $\psi(t)=t$.\\
implies $\psi(G(Tx,Ty,Tz)\leq\psi(M(x,y,z))-\phi(M(x,y,z))$.\\
Hence by Theorem 2.1 T has a fixed point. Here T has a fixed point at 1.
Now if $x=\frac{5}{64}$ and $y=z=0$ then, $|Tx-Ty|=|Tx-Tz|=\frac{5}{32}$ ,\\
but $|x-y|=|x-z|=\frac{5}{64}$, $|Tx-x|=\frac{9}{64}$, $|Ty-y|=\frac{1}{16}$,\\ $|Tx-y|=\frac{7}{32}$, $|Ty-x|=\frac{1}{64}$.\\
So $G(x,y,z)=\frac{5}{64},G(Tx,Ty,Tz)=\frac{5}{32}$ and $M(x,y)$= max$\{|x-y|,|Tx-x|,|Ty-y|,\frac{1}{2}(|Tx-y|+|Ty-x|)\}=\frac{9}{64}<|Tx-Ty|$. \\
Hence,for any $\psi,\phi$; $T$ doesn't satisfy the contractive condition of Theorem 2.5 and as well as the contractive condition of corollary 3.3 in \cite{SRZK}.
\end{exmp}

 Now the following theorems can be proved in similar way as Theorem 2.1 and Theorem 2.5 .
\begin{thm}
\textit{Let $(\textbf{X},\preceq,G)$ be a complete partially ordered G-metric space and let T : $\textbf{X}\rightarrow\textbf{X}$ be a nondecreasing map such that $x_0\preceq{Tx_0}$ for some $x_0\in\textbf{X}$. Suppose that there exists $\psi\in\Psi$ and $\phi\in\Phi$ such that for all comparable $x,y\in\textbf{X}$ with $x\preceq{y}$,}
$$\psi(G(Tx,Ty,T^2x))\leq\psi(N(x,y,Tx))-\phi(N(x,y,Tx)),\eqno{(2.23)}$$
\textit{where M(x,y,Tx)= max $\{G(x,Tx,y),~G(Tx,T^2x,T^2x),~\frac{1}{2}[G(x,Tx,Tx)+G(y,Ty,Ty)],\\
~~~~~~~~~~~~~~~~~~~~~~~~~~~~~~~~~~~~~~~~~~~~~~~~~~~~~~~~~~~~~~~~~~~~~\frac{1}{2}[G(x,T^2x,Ty)+G(Tx,Tx,y)]\}$}.\\
\textit{Now if either T is G-continuous or $(\textbf{X},\preceq,G)$ is nondecreasing then T has a fixed point in \textbf{X}.}
\end{thm}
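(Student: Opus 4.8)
The plan is to run the Picard iteration $x_{n+1}=Tx_n$ and follow the template of Theorem 2.1, carrying the extra index bookkeeping forced by the term $T^2x$ in the contraction. Since $T$ is non-decreasing with $x_0\preceq Tx_0$, one gets $x_n\preceq x_{n+1}$ for all $n$. Applying $(2.23)$ to the comparable pair $x=x_{n-1}\preceq y=x_n$ (so $Tx=x_n$, $Ty=x_{n+1}$, $T^2x=x_{n+1}$) puts $G(Tx,Ty,T^2x)=G(x_n,x_{n+1},x_{n+1})$ on the left, while on the right the functional $N(x_{n-1},x_n,x_n)$ is the maximum of $G(x_{n-1},x_n,x_n)$, $G(x_n,x_{n+1},x_{n+1})$, $\tfrac12[G(x_{n-1},x_n,x_n)+G(x_n,x_{n+1},x_{n+1})]$ and $\tfrac12 G(x_{n-1},x_{n+1},x_{n+1})$ (the last because $G(x_n,x_n,x_n)=0$). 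The rectangle inequality collapses the two averaged blocks below the first two, so $N(x_{n-1},x_n,x_n)=\max\{G(x_{n-1},x_n,x_n),\,G(x_n,x_{n+1},x_{n+1})\}$, which is precisely the quantity in Theorem 2.1. The same contradiction argument (using $\phi(t)>0$ for $t>0$) then makes $\{G(x_{n-1},x_n,x_n)\}$ non-increasing, and lower semicontinuity of $\phi$ with continuity of $\psi$ forces its limit to $0$; hence $G(x_n,x_{n+1},x_{n+1})\to0$ and $G(x_{n-1},x_{n-1},x_n)\to0$.

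Next I would show $\{x_n\}$ is $G$-Cauchy by contradiction, copying the subsequence construction of Theorem 2.1 to obtain $\epsilon>0$, indices $n_k>m_k>k$, and the limits $G(x_{m_k},x_{m_k},x_{n_k})\to\epsilon$, $G(x_{m_k-1},x_{m_k-1},x_{n_k-1})\to\epsilon$, $G(x_{m_k-1},x_{m_k},x_{n_k-1})\to\epsilon$, etc. The one new feature is that applying $(2.23)$ to $x=x_{m_k-1}\preceq y=x_{n_k-1}$ puts $G(Tx,Ty,T^2x)=G(x_{m_k},x_{n_k},x_{m_k+1})$ on the left; using $G(x_{m_k},x_{m_k+1},x_{m_k+1})\to0$ together with the rectangle inequality I would show this tends to $\epsilon$ as well. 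A routine term-by-term computation of $N(x_{m_k-1},x_{n_k-1},x_{m_k})$ (each averaged block handled by the rectangle inequality and the consecutive-term estimate) likewise gives $N\to\epsilon$. Passing to the limit in $(2.23)$ and using continuity of $\psi$ and lower semicontinuity of $\phi$ yields $\psi(\epsilon)\le\psi(\epsilon)-\phi(\epsilon)$, a contradiction; thus $\{x_n\}$ converges to some $x^*\in\textbf{X}$.

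Finally I would identify $x^*$ as a fixed point. If $T$ is $G$-continuous, then $x_{n+1}=Tx_n\to Tx^*$ and $x_{n+1}\to x^*$, so $Tx^*=x^*$. If instead $(\textbf{X},\preceq,G)$ is regular non-decreasing, then $x_n\preceq x^*$ and I apply $(2.23)$ to $x=x_n\preceq y=x^*$. Writing $a=G(x^*,Tx^*,Tx^*)$ and $c=G(x^*,x^*,Tx^*)$, the left side tends to $G(x^*,Tx^*,x^*)=c$, whereas the only surviving terms of $N(x_n,x^*,Tx_n)$ are $\tfrac12 a$ (from $\tfrac12[G(x,Tx,Tx)+G(y,Ty,Ty)]$) and $\tfrac12 c$ (from $\tfrac12[G(x,T^2x,Ty)+G(Tx,Tx,y)]$), so $N\to\tfrac12\max\{a,c\}$.

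This orientation mismatch is the main obstacle and is exactly where the argument departs from Theorem 2.1: the left side retains the full value $c$, while $N$ keeps only halved distances in the \emph{opposite} orientation, so the clean cancellation of Theorem 2.1 is unavailable. I resolve it with the standard $G$-inequality $G(p,p,q)\le 2G(p,q,q)$ (the same estimate used to pass from $(2.7)$ to $(2.8)$), which gives $a\le 2c$ and hence $L:=\tfrac12\max\{a,c\}\le c$. By monotonicity of $\psi$ the limiting form of $(2.23)$ then reads $\psi(c)\le\psi(L)-\phi(L)\le\psi(c)-\phi(L)$, forcing $\phi(L)=0$, so $L=0$; therefore $a=c=0$ and $Tx^*=x^*$. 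The remaining paragraphs, steps one and two, are entirely parallel to Theorem 2.1 and require no new ideas.
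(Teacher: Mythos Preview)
Your proof is correct and is exactly what the paper intends: the paper gives no explicit argument for this theorem but simply states that it ``can be proved in similar way as Theorem 2.1 and Theorem 2.5.'' Your handling of the regular non-decreasing case---where $N(x_n,x^*,Tx_n)\to\tfrac12\max\{a,c\}$ rather than the full distance, and the inequality $G(x^*,Tx^*,Tx^*)\le 2G(x^*,x^*,Tx^*)$ is needed to close the argument---actually supplies a detail the paper leaves implicit.
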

\begin{thm}
\textit{Let $(\textbf{X},\preceq,G)$ be a complete partially ordered G-metric space and let T : $\textbf{X}\rightarrow\textbf{X}$ be a nondecreasing map such that $x_0\preceq{Tx_0}$ for some $x_0\in\textbf{X}$. Suppose that there exists $\psi\in\Psi$ and $\phi\in\Phi$ such that for all comparable $x,y\in\textbf{X}$ with $x\preceq{y}$}
$$\psi(G(Tx,T^{2}x,Ty))\leq\psi(G(x,Tx,y))-\phi(G(x,Tx,y)).$$
\textit{Now if either T is G-continuous or $(\textbf{X},\preceq,G)$ is nondecreasing then T has a fixed point in \textbf{X}.}
\end{thm}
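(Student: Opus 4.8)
The plan is to follow the scheme of Theorem 2.1 almost verbatim, exploiting that the present contraction is of the simpler ``$G(x,Tx,y)$'' type so that the reduction through $M(x,y,z)$ is no longer needed. First I would fix $x_0$ with $x_0\preceq Tx_0$ and set $x_{n+1}=Tx_n$; since $T$ is nondecreasing this produces a monotone chain $x_0\preceq x_1\preceq\cdots$, so consecutive iterates are comparable and the contraction applies along the orbit. Substituting $x=x_{n-1}$ and $y=x_n$ gives $Tx=x_n$, $T^2x=x_{n+1}$, $Ty=x_{n+1}$, hence
$$\psi(G(x_n,x_{n+1},x_{n+1}))\leq\psi(G(x_{n-1},x_n,x_n))-\phi(G(x_{n-1},x_n,x_n)).$$
Writing $t_n=G(x_n,x_{n+1},x_{n+1})$, this is exactly the recursion $\psi(t_n)\leq\psi(t_{n-1})-\phi(t_{n-1})$, so monotonicity of $\psi$ makes $\{t_n\}$ nonincreasing and bounded below. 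Arguing as in (2.5)--(2.7), if its limit $a$ were positive then lower semicontinuity of $\phi$ and continuity of $\psi$ would force $\psi(a)\leq\psi(a)-\phi(a)$, a contradiction; thus $t_n\to0$, and the inequality $G(x_{n-1},x_{n-1},x_n)\leq 2G(x_{n-1},x_n,x_n)$ gives $G(x_n,x_n,x_{n+1})\to0$ as well.

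The main work, and the step I expect to be the real obstacle, is the Cauchy argument, because the arguments $(Tx,T^2x,Ty)$ of this contraction are asymmetric and so the two sides carry mismatched index patterns. As in (2.9)--(2.15) I would assume $\{x_n\}$ is not $G$-Cauchy, extract $\epsilon>0$ and subsequences $n_k>m_k>k$ with $n_k$ minimal satisfying $G(x_{m_k},x_{m_k},x_{n_k})\geq\epsilon$, and obtain the standard limits, in particular $G(x_{m_k},x_{m_k},x_{n_k})\to\epsilon$ and $G(x_{m_k-1},x_{m_k-1},x_{n_k-1})\to\epsilon$. The new ingredient is the substitution $x=x_{m_k-1}$, $y=x_{n_k-1}$ (comparable since $m_k-1<n_k-1$), which yields
$$\psi(G(x_{m_k},x_{m_k+1},x_{n_k}))\leq\psi(G(x_{m_k-1},x_{m_k},x_{n_k-1}))-\phi(G(x_{m_k-1},x_{m_k},x_{n_k-1})).$$
I would then show both inner $G$-values tend to $\epsilon$: using (G4) and the rectangle inequality (G5) one bounds $G(x_{m_k},x_{m_k+1},x_{n_k})$ above by $G(x_{m_k+1},x_{m_k},x_{m_k})+G(x_{m_k},x_{m_k},x_{n_k})$ and below through $G(x_{m_k},x_{m_k},x_{n_k})\leq G(x_{m_k},x_{m_k+1},x_{m_k+1})+G(x_{m_k+1},x_{m_k},x_{n_k})$; since the consecutive terms vanish, both estimates squeeze the value to $\epsilon$, and an identical computation handles $G(x_{m_k-1},x_{m_k},x_{n_k-1})$. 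Passing to the limit then gives $\psi(\epsilon)\leq\psi(\epsilon)-\phi(\epsilon)$, contradicting $\phi(\epsilon)>0$, so $\{x_n\}$ is $G$-Cauchy.

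Finally, completeness furnishes $x^*$ with $x_n\to x^*$, and it remains to identify $x^*$ as a fixed point. If $T$ is $G$-continuous, then $x_{n+1}=Tx_n\to Tx^*$ while $x_{n+1}\to x^*$, and uniqueness of limits in a $G$-metric space gives $Tx^*=x^*$. If instead $(\textbf{X},\preceq,G)$ is regular nondecreasing, then $x_n\preceq x^*$ for all $n$, so substituting $x=x_n$, $y=x^*$ yields
$$\psi(G(x_{n+1},x_{n+2},Tx^*))\leq\psi(G(x_n,x_{n+1},x^*))-\phi(G(x_n,x_{n+1},x^*)).$$
Here the right-hand argument $G(x_n,x_{n+1},x^*)\to G(x^*,x^*,x^*)=0$, so the whole right side tends to at most $0$, while the left side tends to $\psi(G(x^*,x^*,Tx^*))$ by continuity of $G$ and of $\psi$; hence $\psi(G(x^*,x^*,Tx^*))\leq0$, which forces $G(x^*,x^*,Tx^*)=0$ and $Tx^*=x^*$. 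In both cases $T$ has a fixed point, completing the proof.
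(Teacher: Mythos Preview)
Your proof is correct. The orbit recursion, the $\psi/\phi$ limit argument for $t_n\to 0$, the Cauchy step with the asymmetric substitution $x=x_{m_k-1}$, $y=x_{n_k-1}$ and the rectangle-inequality squeeze showing both $G(x_{m_k},x_{m_k+1},x_{n_k})$ and $G(x_{m_k-1},x_{m_k},x_{n_k-1})$ tend to $\epsilon$, and the two endgame cases are all sound.

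The paper, however, does not prove Theorem~2.7 directly: it only remarks that Theorems~2.6 and~2.7 ``can be proved in similar way as Theorem~2.1 and Theorem~2.5''. Read in parallel with the structure of Theorem~2.5, the intended argument for Theorem~2.7 is a \emph{reduction} rather than a repetition: since $G(x,Tx,y)$ is one of the terms in the maximum $N(x,y,Tx)$ of Theorem~2.6, one has $G(x,Tx,y)\leq N(x,y,Tx)$, and Lemmas~2.3 and~2.4 then manufacture a $\phi_1\in\Phi$ with
\[
\psi(G(Tx,T^2x,Ty))\leq\psi(G(x,Tx,y))-\phi(G(x,Tx,y))\leq\psi(N(x,y,Tx))-\phi_1(N(x,y,Tx)),
\]
after which Theorem~2.6 applies directly. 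Your route is more elementary and self-contained, avoiding the somewhat delicate construction of $\phi_1$ in Lemma~2.3; the paper's route is more modular, recycling Theorem~2.6 and the two lemmas instead of rerunning the whole Cauchy machinery. Both are valid, and in fact your direct argument is arguably cleaner for this particular statement since the simple right-hand side $G(x,Tx,y)$ makes the limits in the Cauchy step trivial compared with the $M$-computation in Theorem~2.1.
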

The following example shows that Theorem 2.6 is generalization  of Theorem 2.7 .
\begin{exmp}
Let $\textbf{X}=\{1,2,3\}$ and $x\preceq{y}~~\textmd{if  }~x\leq{y}$ and define $G$-metric on $\textbf{X}$ by \\
$G(1,1,1)=G(2,2,2)=G(3,3,3)=0,~G(1,1,2)=G(2,2,3)=3,~G(1,2,2)=G(1,2,3)=5,\\~G(1,1,3)=G(2,3,3)=4,~G(1,3,3)=2$.\\
Then $(\textbf{X},\preceq,G)$ be a complete partially ordered $G$-metric space.\\
 Now Define $T:\textbf{X}\rightarrow\textbf{X}$ by $T(1)=2,~~T(2)=T(3)=3$ and let
$\psi(t)=t$ and $\phi(t)=\frac{t}{20}$.\\
Then $T$ is $G$-continuous, non-decreasing and $1\preceq~T(1)=2$. Now we show that T satisfies the contractive condition in Theorem 2.6 .\\
Now if$~~~~~x=1,y=1~~~G(Tx,Ty,T^2x)=3,~~~G(x,y,Tx)=3,~~~N(x,y,Tx)=5\Longrightarrow\phi(N(x,y,Tx)=\frac{1}{4}$\\\\
$~~~~~~~~~~~~~~~x=1,y=2~~~G(Tx,Ty,T^2x)=4,~~~G(x,y,Tx)=5,~~~N(x,y,Tx)=5\Longrightarrow\phi(N(x,y,Tx)=\frac{1}{4}$\\\\
$~~~~~~~~~~~~~~~x=1,y=3~~~G(Tx,Ty,T^2x)=4,~~~G(x,y,Tx)=5,~~~N(x,y,Tx)=5\Longrightarrow\phi(N(x,y,Tx)=\frac{1}{4}$\\\\
$~~~~~~~~~~~~~~~x=2,y=2~~~G(Tx,Ty,T^2x)=0,~~~G(x,y,Tx)=3,~~~N(x,y,Tx)=4\Longrightarrow\phi(N(x,y,Tx)=\frac{1}{5}$\\\\
$~~~~~~~~~~~~~~~x=3,y=3~~~G(Tx,Ty,T^2x)=0,~~~G(x,y,Tx)=0,~~~N(x,y,Tx)=0\Longrightarrow\phi(N(x,y,Tx)=0$.\\\\
Hence $\psi(G(Tx,Ty,T^2x)\leq\psi(N(x,y,Tx))-\phi(N(x,y,Tx))~~~~\forall~x\preceq{y}$. Here, T has a fixed point at 3.\\
But $\nexists$ any $\psi\in\Psi$ and $\phi\in\Phi$ such that\\
$\psi(G(Tx,Ty,T^2x)\leq\psi(G(x,y,Tx))-\phi(G(x,y,Tx)),~~~~\forall~x\preceq{y}$ holds.
\end{exmp}
\section{Uniqueness of fixed point}
The following example shows that conditions of the Theorem 2.1 are not sufficient for the uniqueness of fixed point.
\begin{exmp}
Let $\textbf{X}=\{2,3\}$ and $x\preceq~y$ iff $x\mid{y}$ and defined G-metric by $G(x,y,z)$=max $\{|x-y|,|y-z|,|z-x|\}$. Then $(\textbf{X},\preceq,G)$ is complete partially ordered G-metric space and 2 is not comparable with 3. Consider the mapping $T:\textbf{X}\rightarrow\textbf{X}$ by T(2)=2 and T(3)=3. Then $G(Tx,Ty,Tz)=0~~~~\forall~x\preceq{y}\preceq{z}$. Therefore taking $\psi(t)=t$ and $\phi(t)=\frac{1}{2}t$, $(\textbf{X},\preceq,G)$ and T satisfies all conditions in Theorem 2.1. However, T has two fixed point in $\textbf{X}$.

\end{exmp}
In the next theorem we give a sufficient conditions for the uniqueness of the fixed point.
\begin{thm}
\textit{Suppose all the condition in Theorem 2.1 holds and let, for any  $x,y\in{T_\textbf{X}}$ there exists $z\in\textbf{X}$ such that, $x\preceq{z}$ and $y\preceq{z}$ and also  $\{T^mz\}$ is a convergent sequence in $(\textbf{X},G)$. Then T has unique fixed point. Where $T_\textbf{X}$ is set of all fixed point of T in $\textbf{X}$}.
\end{thm}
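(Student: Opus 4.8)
The plan is to take two arbitrary fixed points $x,y\in T_{\textbf{X}}$ and force $x=y$ by manufacturing a single sequence converging to both. By hypothesis there is $z\in\textbf{X}$ with $x\preceq z$, $y\preceq z$, and $\{T^m z\}$ convergent. Since $T$ is non-decreasing and $Tx=x$, iterating gives $x=T^m x\preceq T^m z$ for every $m$, and likewise $y\preceq T^m z$; in particular the chain $x\preceq x\preceq T^m z$ is comparable, so the contraction $(2.1)$ applies to the triple $(x,x,T^m z)$.

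Writing $s_m=G(x,x,T^m z)$ and $r_m=G(T^m z,T^{m+1}z,T^{m+1}z)$, I would first compute $M(x,x,T^m z)$. Using $Tx=x$ and $G(x,x,x)=0$, every term of $M$ collapses and one is left with $M(x,x,T^m z)=\max\{s_m,\,r_m,\,\tfrac12(s_{m+1}+s_m)\}$, so that $(2.1)$ reads $\psi(s_{m+1})\le\psi(M_m)-\phi(M_m)$ with $M_m$ this maximum. Because $\{T^m z\}$ converges, $T^m z$ and $T^{m+1}z$ share a limit and hence $r_m\to 0$ by joint continuity of $G$; this is exactly the point at which the convergence hypothesis is used.

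The heart of the proof is to deduce $s_m\to 0$ from this recursion, and this is the step I expect to be the main obstacle. The awkward feature is that $M_m$ contains both the vanishing term $r_m$ and the cross term $\tfrac12(s_{m+1}+s_m)$, which depends on $s_{m+1}$ itself, so $\{s_m\}$ is not manifestly monotone. I would handle this by a short case analysis: if ever $s_{m+1}>\max\{s_m,r_m\}$ then $M_m=\tfrac12(s_{m+1}+s_m)<s_{m+1}$, and monotonicity of $\psi$ turns $(2.1)$ into $\psi(s_{m+1})\le\psi(s_{m+1})-\phi(M_m)$, forcing $M_m=0$ and hence $s_{m+1}=0$, a contradiction; thus $s_{m+1}\le\max\{s_m,r_m\}$ always. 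Consequently, whenever $s_m\ge r_m$ one has $s_{m+1}\le s_m$, so $M_m=s_m$ and $\psi(s_{m+1})\le\psi(s_m)-\phi(s_m)$. Fixing $\epsilon>0$ and $N$ with $r_m<\epsilon$ for $m\ge N$, lower semicontinuity of $\phi$ gives $c:=\inf_{[\epsilon,K]}\phi>0$, where $[0,K]$ contains the bounded range of $\{s_m\}$; hence each step with $s_m\ge\epsilon$ strictly decreases $\psi(s_m)$ by at least $c$, which can happen only finitely often, after which $s_m<\epsilon$ and the bound $s_{m+1}\le\max\{s_m,r_m\}<\epsilon$ keeps it there. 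Therefore $\limsup s_m\le\epsilon$ for every $\epsilon$, i.e. $s_m\to 0$.

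Finally I would convert $s_m\to 0$ into $x=y$. The identical argument applied to $y$ yields $\tilde s_m:=G(y,y,T^m z)\to 0$. Using the standard $G$-metric estimate $G(x,T^m z,T^m z)\le 2\,G(x,x,T^m z)$ together with the rectangle inequality (G5), $G(x,y,y)\le G(x,T^m z,T^m z)+G(T^m z,y,y)\le 2s_m+\tilde s_m\to 0$, whence $G(x,y,y)=0$ and so $x=y$ by (G1)--(G2). Everything outside the third paragraph is routine bookkeeping with the $G$-metric axioms; the genuine content is taming $M_m$ so that the recursion truly forces $s_m\to0$ despite the non-monotone cross term.
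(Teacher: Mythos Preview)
Your overall strategy coincides with the paper's: pick $z$ above both fixed points, iterate, compute $M(x,x,T^mz)=\max\{s_m,r_m,\tfrac12(s_{m+1}+s_m)\}$, use convergence of $\{T^mz\}$ to kill $r_m$, drive $s_m\to0$, then finish with $G(x,y,y)\le G(x,T^mz,T^mz)+G(T^mz,y,y)$. One cosmetic slip: when $s_{m+1}>\max\{s_m,r_m\}$ you assert $M_m=\tfrac12(s_{m+1}+s_m)$, but the maximum could just as well be $r_m$; what you actually need, and what holds, is $M_m<s_{m+1}$, since all three candidates are strictly below $s_{m+1}$. The contradiction then goes through unchanged.

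Where you genuinely diverge from the paper is in the mechanism forcing $s_m\to0$. The paper splits into two cases according to whether $r_m$ is eventually dominated by $\max\{s_m,\tfrac12(s_{m+1}+s_m)\}$; in the ``good'' case it obtains eventual monotonicity $s_{m+1}\le s_m$, passes to the limit $a$ in $\psi(s_{m+1})\le\psi(s_m)-\phi(s_m)$, and gets $\psi(a)\le\psi(a)-\phi(a)$. You instead prove the uniform trapping bound $s_{m+1}\le\max\{s_m,r_m\}$ for all $m$, then use compactness of $[\epsilon,K]$ and lower semicontinuity of $\phi$ to extract a uniform drop $c=\inf_{[\epsilon,K]}\phi>0$ in $\psi$, forcing $s_m$ below $\epsilon$ in finitely many steps and keeping it there. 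Your route is a bit more quantitative and sidesteps the paper's second case (which is handled rather tersely there); the paper's limit argument in its first case is marginally slicker since it avoids invoking attainment of the infimum. Either way the argument is sound. The paper also treats the case of comparable $x,y$ separately by a one-line direct application of (2.1); you absorb this into the general argument via $z$, which is perfectly legitimate since the hypothesis supplies such a $z$ regardless.
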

\begin{proof}
 Let T has two fixed point $x$ and $y$ in $\textbf{X}$. Consider the following two cases.\\
$~~~1.$Now If $x$ and $y$ are comparable. Without loss of generality let $x\preceq{y}$ then by $(2.1)$ we have
$$\psi(G(x,x,y))=\psi(G(Tx,Tx,Ty))\leq\psi(M(x,x,y))-\phi(M(x,x,y)),\eqno{(3.1)}$$
where
$M(x,x,y)$= max $\{G(x,Tx,x),~G(x,Tx,y),~G(x,x,y),~G(x,Tx,Tx),\\
~~~~~~~~~~~~~~~~~~~~~~~~~~~~~~~~~~~~~~~~~~~~~~~~~~~~~~~~G(y,Ty,Ty),~\frac{1}{2}[G(x,Tx,Ty)+G(Tx,x,y)]\}$ \\
$~~~~~~~~~~~~~~~~~~~~~~=~G(x,x,y).$\\
Therefore, from  $(3.1)$ we get
$$\psi(G(x,x,y))\leq\psi(G(x,x,y))-\phi(G(x,x,y)),$$
which lead us to a contradiction unless $G(x,x,y)=0$. Thus $x=y$.\\
$~~~2.$ If $x$ and $y$ are not comparable. Then by assumption in theorem, there is a $z\in\textbf{X}$ such that $x\preceq{z}$ and $y\preceq{z}$. Since $T$ is non-decreasing, $T^mx=x\preceq~T^mz$ and $T^my=y\preceq{T^mz}$ for all $m=0,1,2,3,.........$\\
Then,
 $$\psi(G(x,x,T^{m+1}z))=\psi(G(Tx,Tx,T^{m+1}z))\leq\psi(M(x,x,T^mz))-\phi(M(x,x,T^mz)),\eqno{(3.2)}$$\\
 where, \\$M(x,x,T^mz)$= max $\{G(x,Tx,x),~G(x,Tx,T^mz),~G(x,x,T^mz),~G(x,Tx,Tx),\\
~~~~~~~~~~~~~~~~~~~~~~~~~~~~~~G(T^mz,T^{m+1}z,T^{m+1}z),~\frac{1}{2}[G(x,Tx,T^{m+1}z)+G(Tx,x,T^mz)]\}$ \\
$~~~~~~~~~~~~~~~~~~~~~~~~~~~~$= max $\{G(x,x,T^mz),~G(T^mz,T^{m+1}z,T^{m+1}z),~\frac{1}{2}[G(x,x,T^{m+1}z)+G(x,x,T^mz)]\}~~~~~(3.3)$ \\
  $$\textmd{Now since, $\{T^mz\}$ is convergent,}\lim_{m\rightarrow\infty}G(T^mz,T^{m+1}z,T^{m+1}z)=0.~~~~~~~~~~~~~~~~~~~~~~~~~~~~~~~~~~~~~~~~~~~~~~~~~~$$
Then, \textit{Case I: } $\exists~~~K\in\mathbb{N}$ such that for all $m>K$,\\
 $G(T^mz,T^{m+1}z,T^{m+1}z)\leq${ max $\{G(x,x,T^mz),~\frac{1}{2}[G(x,x,T^{m+1}z)+G(x,x,T^mz)]\}$}. \\
So then, $M(x,x,T^mz)$= max $\{G(x,x,T^mz),~\frac{1}{2}[G(x,x,T^{m+1}z)+G(x,x,T^mz)]\}$. \\
Now if $G(x,x,T^{m+1}z)>G(x,x,T^mz)$, then $\frac{1}{2}[G(x,x,T^{m+1}z)+G(x,x,T^mz)]<G(x,x,T^{m+1}z)$ and $M(x,x,T^mz)=~\frac{1}{2}[G(x,x,T^{m+1}z)+G(x,x,T^mz)]>0\Longrightarrow\phi(G(~\frac{1}{2}[G(x,x,T^{m+1}z)+G(x,x,T^mz)])>0$ then from (3.3) we get \\
  $\psi(G(x,x,T^{m+1}z))\leq\psi(\frac{1}{2}[G(x,x,T^{m+1}z)+G(x,x,T^mz)])-\phi(\frac{1}{2}[G(x,x,T^{m+1}z)+G(x,x,T^mz)])$\\
  $~~~~~~~~~~~~~~~~~~~~~~~~~<\psi(\frac{1}{2}[G(x,x,T^{m+1}z)+G(x,x,T^mz)])\leq\psi(G(x,x,T^{m+1}z))$,
  which is a contradiction. So, we have $G(x,x,T^{m+1}z)\leq~M(x,x,T^mz)=G(x,x,T^mz)$\\
  Hence,$\{G(x,x,T^mz)\}$ is a positive, decreasing sequence in $\mathbb{R}$ for $m>K$, which is bounded below, so it is convergent and therefore $\exists$   $a\geq0$ such that
  $$\lim_{m\rightarrow\infty}G(x,x,T^mz)=a.~~~~~~~~~~~~~~~~~~~~~~~~~~~~~~~~~~~~~~~~~~~~~~~~~~~~~~~~~~~~~~~~~~~~~~~~~~~~~~~~~~~~~~~~~~~~~~~~~~~~~~~~$$
  $$\textmd{Now if, $a>0$ then $\phi(a)>0$, letting limit $m\rightarrow\infty$ in $(3.2)$, we get~~~~~}\psi(a)\leq\psi(a)-\phi(a).~~~~~~~~~~~~~~~~~~~~$$
  Which is a contradiction. So $a=0$ that is
  $$\lim_{m\rightarrow\infty}G(x,x,T^mz)=0.~~~~~~~~~~~~~~~~~~~~~~~~~~~~~~~~~~~~~~~~~~~~~~~~~~~~~~~~~~~~~~~~~~~~~~~~~~~~~~~~~~~~~~~~~~~~~~~~~~~~~~~~~~~~\eqno{(3.4)}$$
\textit{Case II: } There is no $K\in\mathbb{N}$ such that \\ $G(T^mz,T^{m+1}z,T^{m+1}z)\leq{\textmd{max }\{G(x,x,T^mz),\frac{1}{2}[G(x,x,T^{m+1}z)+G(x,x,T^mz)]\}}~~\forall~m>K$ holds.
    $$\textmd{So as }\lim_{m\rightarrow\infty}G(T^mz,T^mz,T^{m+1}z)=0\Longrightarrow\lim_{m\rightarrow\infty}M(x,x,T^mz)=0,~~~~~~~~~~~~~~~~~~~~~~~~~~~~~~~~~~~~~~~~~~~~~~~~~$$
   $$\textmd{then, }\lim_{m\rightarrow\infty}\psi(G(x,x,T^{m+1}z))=0\Longrightarrow\lim_{m\rightarrow\infty}G(x,x,T^{m+1}z)=0.~~~~~~~~~~~~~~~~~~~~~~~~~~~~~~~~~~~~~~~~~~~~~~~~~~~~~~~~~~~~~~~~~~~~~~~~~~~$$
  Now Since $G(x,T^mz,T^mz)\leq~2G(x,x,T^mz)$
  $$\lim_{m\rightarrow\infty}G(x,T^mz,T^mz)=0.~~~~~~~~~~~~~~~~~~~~~~~~~~~~~~~~~~~~~~~~~~~~~~~~~~~~~~~~~~~~~~~~~~~~~~~~~~~~~~~~~~~~~~~~~~~~~~~~~~~~~~~~~~~~\eqno{(3.5)}$$
  $$\textmd{Similarly ,}\lim_{m\rightarrow\infty}G(y,T^mz,T^mz)=0,~~~~~~~~~~~~~~~~~~~~~~~~~~~~~~~~~~~~~~~~~~~~~~~~~~~~~~~~~~~~~~~~~~~~~~~~~~~~~~~~~~~~~~~~~~~~~~~~~~~~~~~~~~~~\eqno{(3.6)}$$
  as $G(x,x,y)\leq{G(x,x,T^mz)+G(y,T^mz,T^mz)}$, so $G(x,x,y)=0$ and hence $x=y$. So uniqueness of fixed point is proved.
  \end{proof}
By the following example we will show that if we remove convergence condition of $\{T^mz\}$ in Theorem 3.1, then T may not have unique fixed point.
\begin{exmp}
Let $\textbf{X}=~\{2,3,12,-18,30,-42,......\}$ i.e, $\textbf{X}$ consists of 2,3 and $(-1)^np_n.6~~\forall~n\in\mathbb{N}$ where $p_1,p_2,.....$ are prime number in usual order with $p_1=2, p_2=3$ and so on.\\
Define $\preceq$ on $\textbf{X}$ by $a\preceq~b$ if $a\mid{b}$. Then $2\preceq(-1)^np_n.6$ and $3\preceq(-1)^np_n.6$.\\
Now since for $n\neq{m}~~p_n\nmid{p_m}$, so $(-1)^np_n.6$ and $(-1)^mp_m.6$ are not comparable and also 2,3 aren't comparable.\\
Define G-metric on \textbf{X} by
$G(x,y,z)$= max$\{|x-y|,|y-z|,|z-x|\}$.
Then ($\textbf{X},\preceq$,G) is complete partially ordered $G$-metric space. Let $T:\textbf{X}\rightarrow\textbf{X}$ defined by\\
$T(2)=2,~T(3)=3,~T((-1)^np_n.6)=(-1)^{n+1}p_{n+1}.6~~\forall~n\in\mathbb{N}$, and $\psi,\phi:[0,\infty)\rightarrow[0,\infty)$ defined by \\
$\psi(t)=t~~~\forall~t\in[0,\infty)$ and $\phi(t)=\left\{
\begin{array}{ll}
\frac{1}{2}t & \mbox{if } x\in[0,2]\\
1 & \mbox{if } x\in(2,\infty)
\end{array}
\right.$\\
Then $T$ is $G$-continuous, nondecreasing, and since $2\mid2\Longrightarrow2\preceq~T(2)$. Now we show that T satisfies the contractive condition in Theorem 2.1. Since only 2 and 3 are comparable with $(-1)^np_n.6$.
Let $x=2,y=3$ and $z=(-1)^np_n.6$ for any $n\in\mathbb{N}$, then $x\preceq{z}$,$y\preceq{z}$ and $\{T^mz\}$ doesn't converge in $\textbf{X}$. \\
So, $G(Tx,Tz,Tz)=G(Tx,Tx,Tz)=G(2,2,(-1)^np_n.6)=|(-1)^np_n.6-2|$ and\\
$M(x,x,z)\\~~~~~~~~~~~=M(2,2,(-1)^np_n.6)$ = max $\{G(2,2,(-1)^np_n.6),G((-1)^np_n.6,(-1)^{n+1}p_{n+1}.6,(-1)^{n+1}p_{n+1}.6),$\\
$~~~~~~~~~~~~~~~~~~~~~~~~~~~~~~~~~~~~~~~~~~~~~~~~~~~~~~~~~~~~\frac{1}{2}[G(2,2,(-1)^{n+1}p_{n+1}.6)+G(2,2,(-1)^np_n.6)]\}$\\
$~~~~~~~~~~~~~~~~~~~~~~~~~~~~~~~~~~~~~~~~=(p_n+p_{n+1}).6$ .\\
Then $\psi(G(Tx,Tx,Tz))=|(-1)^np_n.6-2|\leq{p_{n+1}.6+2}\leq(p_n+p_{n+1}).6-1=\psi(M(x,x,z))-\phi(M(x,x,z))$, as $p_n.6\geq12$.\\
similarly $\psi(G(Ty,Ty,Tz)=|(-1)^np_n.6-3|\leq(p_n+p_{n+1}).6-1=\psi(M(y,y,z))-\phi(M(y,y,z))$.\\
So by Theorem 2.1 T has a fixed point. Here T has two fixed point 2 and 3, so fixed point is not unique, as $\{T^n(-1)^np_n.6\}$ doesn't converge in $\textbf{X}$.
\end{exmp}

\textbf{Remark:}
 Under the conditions of uniqueness of fixed point in the previous theorem, it can be proved by similar way that the Theorem 2.6 has a unique fixed point.\\\\\\
 \textbf{Acknowledgements}\\$~~~$ SMH and SB gratefully acknowledge the support of Aliah University, Kolkata for providing all the facilities when the manuscript was prepared. SB also Acknowledge the financial support of CSIR, Govt. of India.\\

\end{document}